\documentclass[12pt]{article}

\usepackage[margin=1in]{geometry}

\usepackage{float}

\usepackage{relsize}

\usepackage{diagbox}
\usepackage{mathtools}
\usepackage{bbm}
\usepackage{latexsym}
\usepackage{epsfig}
\usepackage{amsmath,amsthm,amssymb,enumerate}

\allowdisplaybreaks
\def\bZ{{\bf Z}}
\def\SM{\text{SMALL}}
\usepackage{color}
\usepackage[usenames,dvipsnames]{xcolor}

\def\cH{{\mathcal H}}
\def\nn{\nonumber}
\def\a{\alpha}  \def\d{\delta} \def\D{\Delta}
\def\e{\varepsilon} \def\f{\phi}   
\def\G{\Gamma}  \def\k{\kappa}
\def\z{\zeta} \def\th{\theta}    
 \def\m{\mu} \def\n{\nu} \def\p{\pi}
\def\r{\rho}  \def\s{\sigma} 
\def\t{\tau} \def\om{\omega}  \def\Om{\Omega}\def\U{\Upsilon}

\renewcommand{\c}[1]{{\mathcal #1}}
\def\cP{{\cal P}}

\def\bx{{\bf x}}

\newtheorem{theorem}{Theorem}
\newtheorem{lemma}[theorem]{Lemma}

\newcommand{\wh}[1]{\widehat{#1}}

\newcommand{\rdup}[1]{{\left\lceil #1\right\rceil }}
\newcommand{\rdown}[1]{{\left\lfloor #1\right \rfloor}}

\newcommand{\brac}[1]{\left(#1\right)}

\newcommand{\bfrac}[2]{\left(\frac{#1}{#2}\right)}

\def\cE{{\cal E}}
\newcommand{\rai}{\rightarrow \infty}

\newcommand{\set}[1]{\left\{#1\right\}}
\def\sm{\setminus}
\def\seq{\subseteq}
\def\es{\emptyset}

\def\E{\mathbb{E}}
\def\Var{\mathbb{V}ar}
\def\Pr{\mathbb{P}}

\def\cF{{\cal F}}
\newcommand{\ignore}[1]{}

\def\cB{{\mathcal B}}
\def\cC{{\mathcal C}}
\def\cD{{\mathcal D}}
\def\cE{{\mathcal E}}
\def\cF{{\mathcal F}}

\def\cH{{\mathcal H}}

\def\cM{{\mathcal M}}

\def\cP{{\mathcal P}}

\newcommand{\card}[1]{\left|#1\right|}

\newcommand{\beq}[2]{\begin{equation}\label{#1}#2\end{equation}}

\def\nn{\nonumber}

\def\bd{{\bf d}}

\def\bz{{\bf z}}

\newcommand{\dnm}[1]{D_{n,m}^{(\d\geq#1)}}
\newcommand{\cdnm}[1]{{\mathcal D}_{n,m}^{(\delta\geq#1)}}
\newcommand{\gnm}[1]{G_{n,m}^{(\d\geq#1)}}
\def\seq{\subseteq}

\usepackage{tikz}
\tikzset{ b/.style = { circle, draw , thick, inner sep = 0pt
                    , fill = black
                   , minimum size = 2.5pt                     }}

\begin{document}
\author{Colin Cooper\thanks{Research supported  at the University of Hamburg, by a  Mercator fellowship from DFG Project 491453517}\\Department of Informatics\\
King's College\\
London WC2B 4BG\\England
\and
Alan Frieze\thanks{Research supported in part by NSF grant DMS1952285}\\Department of Mathematical Sciences\\Carnegie Mellon University\\Pittsburgh PA 15213\\U.S.A.}

\title{Edge disjoint Hamilton cycles in random digraphs of constant minimum degree}
\maketitle
\begin{abstract}
\parindent 0 in
We study the existence of  directed Hamilton cycles in random digraphs with $m$ edges where we condition on minimum in- and out-degree $\d \ge k+1$, where $k \ge 1$. Denote such a random graph by $D_{n,m}^{(\delta\geq k+1)}$.  Let  $m=cn$ and $c\ge c_k$, where $c_k$ is a sufficiently large constant.  We prove that   w.h.p. $D_{n,m}^{(\delta\geq k+1)}$ contains $k$ edge disjoint Hamilton cycles.
\end{abstract}

\section{Introduction}
Let $D_{n,m}$ denote the random digraph with vertex set $[n]$ and $m$ random edges. McDiarmid \cite{McD} proved that if $m=n(\log n+\log\log n+\om)$ where $\om\to\infty$ then $D_{n,m}$ is Hamiltonian w.h.p. Subsequently, Frieze \cite{F1} sharpened this and proved that if $m=n(\log n+c_n)$
\beq{H1}{
\lim_{n\to\infty}\Pr(D_{n,m}\text{ is Hamiltonian})=\begin{cases}0&c_n\to-\infty.\\e^{-2e^{-c}}&c_n\to c.\\1&c_n\to\infty.\end{cases}
}
The right hand side of \eqref{H1} is the limiting probability that $\d=\min\set{\d^-,\d^+}\geq 1$, where $\d^-$ (resp. $\d^+$) denotes the minimum in-degree (resp. out-degree) of $D_{n,m}$. Cooper and Frieze \cite{CFD1} proved the following: let $m=\tfrac n2(\log n+2\log\log n+c_n)$ then
\beq{H2}{
\lim_{n\to\infty}\Pr(\dnm{1}\text{ is Hamiltonian})=\begin{cases}0&c_n\to-\infty.\\e^{-e^{-c}/8}&c_n\to c.\\1&c_n\to\infty.\end{cases}
}
The right hand side of \eqref{H2} is the limiting probability that $\dnm{1}$ contains two vertices of in-degree one  share a common in-neighbour, or that two vertices of out-degree one share a common out-neighbour.

In this paper we consider random digraphs with $\d\geq k+1$ where $k\geq 1$, but with only  a linear number of edges $m=cn$, where $c$ is a constant  sufficiently larger than $k+1$. The condition that $\d\geq k+1$ is necessary for directed Hamilton cycles; because otherwise, with constant probability there are obstructions such as vertices of out-degree one with at least two in-edges.  We  prove the following theorem.

\begin{theorem}\label{main}
For every $k\geq 1$ there exists a  constant $c_k>0$ such that for any constant $c \ge c_k$, if $m=cn$ then   w.h.p. $D_{n,m}^{(\delta\geq k+1)}$ contains $k$ edge disjoint Hamilton cycles.
\end{theorem}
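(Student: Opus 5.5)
We will prove Theorem \ref{main} by working in a configuration-type model, peeling off Hamilton cycles one at a time, and closing each cycle with rotation arguments.

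\textbf{Model and degree sequence.} Following the approach used for $\gnm{k}$ in the undirected case, we replace $\dnm{k+1}$ by a random bipartite configuration. Each vertex $v$ gets $d^+(v)\ge k+1$ out-points and $d^-(v)\ge k+1$ in-points, and a uniform random matching joins out-points to in-points. Conditioned on the degree sequence, this model is contiguous with $\dnm{k+1}$ after conditioning on no loops or multiple edges; for $m=cn$ that event has probability bounded below. The degrees are asymptotically independent truncated Poisson variables with parameter $\lambda$ solving $\lambda f_{k}(\lambda)/f_{k+1}(\lambda)=c$, where $f_j(\lambda)=e^\lambda-\sum_{i<j}\lambda^i/i!$. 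For $c\ge c_k$ large, w.h.p.\ the following hold.
\begin{enumerate}[(i)]
\item The number of vertices of in- or out-degree at most $c/10$ (``small'' vertices, $\SM$) is at most $e^{-\Omega(c)}n$.
\item Small vertices are far apart: no two lie within distance $\epsilon\log n$ of each other, apart from a bounded number of short cycles.
\item The digraph is a good expander: every set $S$ with $|S|\le n/4$ has $|N^{\pm}(S)|\ge 2|S|$, and every pair of sets of size $\epsilon n$ is joined by many edges.
\end{enumerate}

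\textbf{Extracting the cycles.} The $k$ cycles are found sequentially. First we use the degree condition $\d\ge k+1$ to find, via Hall/flow arguments on the bipartite in/out graph, $k$ edge disjoint $1$-factors $F_1,\dots,F_k$ (permutation digraphs). They are chosen so that every small vertex keeps at least one unused in-edge and out-edge after each round. Expansion (iii) verifies Hall's condition for small sets, and large sets are handled by counting. Each $F_i$ has w.h.p.\ $O(\log n)$ cycles, by a random permutation style argument conditional on the degrees.

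\textbf{Merging cycles into a Hamilton cycle.} We then convert each $F_i$ into a Hamilton cycle $H_i$ using edges not in $H_1,\dots,H_{i-1}$ and not in the other factors. We do this by the Frieze/Cooper–Frieze directed rotation-merging procedure. Take the largest cycle and delete an edge to get a path. Use the directed analogue of Pósa rotations (double rotations, as in \cite{F1,CFD1}) to generate $\Omega(n)$ endpoint pairs. Then close the path or absorb another cycle of $F_i$. For small vertices, the edges of $F_i$ at them are kept fixed as ``forced'' edges; by (ii) these forced edges are isolated, so they can be contracted without harming expansion. Each of the $O(\log n)$ merging steps needs an edge between two linear-size endpoint sets. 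We get this either from the expansion (iii) or by reserving a sprinkled random subset of edges at large-degree vertices, which remain essentially unexposed. The union bound over steps costs only $O(\log n)\cdot e^{-\Omega(n)}$.

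\textbf{Main obstacle.} The hardest step is making the rotation arguments work after the earlier cycles $H_1,\dots,H_{i-1}$ are removed. These removals are dependent on the random graph. At large-degree vertices the removal deletes only $2(i-1)\le 2k\ll c/10$ edges, so expansion survives. The difficulty is showing that removal cannot create a bad small set near low-degree vertices. To handle this we would prove (iii) in a robust form: expansion holds even after deleting any $2k$ in- and out-edges at each vertex, provided at least one in- and one out-edge remains at each small vertex. This robust form is established by a first-moment count over sets $S$ and their candidate neighbourhoods in the configuration model, using $c\ge c_k$ to beat the $\binom{n}{s}$ entropy.
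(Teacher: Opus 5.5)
\textbf{There is a genuine gap: the conversion of the $1$-factors into Hamilton cycles rests on structural claims that are false when $m=cn$, plus a directed rotation lemma that is not available.} Your first step (edge-disjoint $1$-factors via Hall's condition for small sets plus boosters, each with $O(\log n)$ cycles) is close to Phase~1 of the paper.

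\emph{Small vertices are not far apart.} Since $c$ is constant, your own (i) gives $|\SM|=\Theta(e^{-\Theta(c)}n)$, a \emph{linear} number of vertices. A positive fraction of the $cn$ edges join two small vertices; already the vertices of in-degree exactly $k+1$ form a constant fraction of $[n]$. So (ii) fails even at distance $1$. The ``low-degree vertices are far apart'' property belongs to the regime $m=\Theta(n\log n)$, where such vertices number $n^{1-\Omega(1)}$. Consequently the forced edges are not isolated: they form chains and clusters whose ends may again be small. Moreover, if some $F_i$ has a short cycle lying inside $\SM$, no Hamilton cycle contains all forced edges. An example is a $2$-cycle on two vertices of in- and out-degree $k+1$, an event whose probability does not tend to $0$.

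\emph{Property (iii) is false as stated.} For $k=1$ there are w.h.p.\ linearly many pairs $u,v$ of out-degree $2$ with a common out-neighbour, so $|N^+(\{u,v\})|\le 3<4$. The robust version fails trivially, since a small vertex left with one out-edge has $|N^+(\{v\})|=1$. Any repair must exclude small vertices from the expansion statement and control how they cluster along the structures being built. The paper does this probabilistically, using that for large $c$ the small-vertex structure is subcritical: it bounds by $\log^{-c/100k}n$ the probability of an induced path of small parent vertices in Phase~2, and uses Lemma~\ref{cyclesmall} in Phase~3.

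\emph{The merging step has no directed Pósa lemma behind it.} In a digraph you cannot reverse a segment of a path, so Pósa's lemma has no direct analogue. The available move $\U'=\U\cup\{(v,w)\}\setminus\{(x,w)\}$ detaches the segment from $w$ to $v$ as a new (possibly short) cycle rather than keeping the path's vertex set. Nothing in (iii) yields $\Omega(n)$ endpoint pairs, let alone with failure probability $e^{-\Omega(n)}$. The paper avoids needing any such lemma:
\begin{itemize}
\item Phase~0 splits the edges randomly into reservoirs $\wh E_{t,i}$ ($t\le 3$, $i\le k$) and $E_4$. Each $i$ therefore works with its own unexposed random edges, and only $E_{\SM}$ is shared. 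This, not robust expansion, is how dependence on earlier cycles is handled.
\item Phase~2 grows out- and in-trees of NPDs on $E_{3,i}$ only to $\nu=\sqrt n\log n$ leaves, keeps the used set $|W|\le n^{3/4}$, and closes a path with a still-unexposed in-edge. Each small cycle is removed with failure probability only $O((\log\log n)^2/\log n)$. This is affordable because there are $O(\log n)$ cycles, with two attempts for length $\ge 4$ and a Poisson bound for length $\le 3$.
\item Phase~3 patches the cycles of length $\ge n/\log n$ by a second moment count over rearrangements $\tau\in R_\phi$, using $E_{4,i}$.
\end{itemize}
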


Previous work in this area has focused on undirected random graphs $G_{n,m}$. Let $\gnm{k}$ denote the space of simple graphs with $n$ vertices, $m$ edges and minimum degree at least $k$, each graph $G$ being equiprobable. Beginning with Bollob\'as, Cooper, Fenner and Frieze \cite{BCFF} there has been several attempts to find out how many random edges we need  for a Hamilton cycle if we condition on minimum degree at least three. Anastos \cite{A}  proved that $\gnm{4}$ is Hamiltonian w.h.p. if $m>2n$ and Anastos and Frieze \cite{AF2} have shown that $\gnm{3}$ is Hamiltonian w.h.p. if $m>2.663n$. 

Because a random cubic graph is Hamiltonian w.h.p. (see Robinson and Wormald \cite{RW1}), it is natural to conjecture that $\gnm{3}$ is Hamiltonian w.h.p. if $m\geq 3n/2$. More boldly, we suggest the following to be true for $k \ge 3$:\;\; 
Let $G\in \gnm{k}$ have property ${\cal A}_k$ if $G$ contains $\rdown{  (k-1)/2 }$ edge disjoint Hamilton cycles, and, if $k$ is even, a further edge disjoint matching of size $\rdown{ n/2 }$. For $k \ge 3$, ${\cal A}_k$ occurs in $\gnm{k}$ w.h.p. when $m\ge kn/2$.

\subsection*{Proof outline}
We will base our proof on the 3-phase method which has been  used in various forms in, for example,  \cite{CF1}, \cite{CFD1}, \cite{DF}, \cite{FKR}, \cite{FS}. In Phase 1 we construct $k$ sets $\cC_1,\cC_2,\ldots,\cC_k$, where each set $\cC_i$ consists of of $O(\log n)$ vertex disjoint directed cycles that cover all vertices. Furthermore the sets $\cC_i, \cC_j$ will be edge-disjoint for $i \ne j$. In Phase 2, we transform each of the $\cC_i$ so that they remain edge disjoint and each cycle in each $\cC_i$ has length at least $n/\log n$. In Phase 3, we {\em patch} transform these cycle covers into Hamilton cycles. Prior to this we describe a useable model for $\dnm{k+1}$, and  a Phase 0 in which we partition the edge set $E$ into $E_1,E_2,E_3,E_4$ for use in the corresponding phases.

\section{ Preparatory material.}

\subsection{A useable model for $\dnm{k+1}$}
For a sequence $\bx=(x_1,x_2,\ldots,x_{2m})\in [n]^{2m}$ we let $D_\bx$ be the multi-digraph with vertex set $[n]$ and edge set $E_\bx=\set{(x_{2j-1},x_{2j}):j=1,2,\ldots,m}$. Let  $d^+_{\bx}(i)=|\set{j:x_{2j-1}=i}|$ be the out-degree of $i\in[n]$ in $D_\bx$ and similarly let  $d^-_{\bx}(i)=|\set{j:x_{2j}=i}|$ be the in-degree of $i\in[n]$ in $D_\bx$.
 Then  $\Omega_k(m)=\set{\bx\in [n]^{2m}:d^+_{\bx}(i),d^-_{\bx}(i)\geq k+1 \text{ for all }i\in[n]}$. The degree $d_\bx(i)=d^+_{\bx}(i)+d^-_{\bx}(i)$. For brevity we will sometimes speak of $\bx$ as if it is a digraph $D_\bx$.
For example, the out-degree of $v \in \bx$ should be taken to mean the out-degree of $v$ in $D_\bx$ etc.

Let $d^\pm$ denote a given degree sequence, thus  $d^{\pm}=((d^-(j),d^+(j)),\; j \in [n])$,
and let $\Om_k(m,d^\pm)$ denote those $\bx \in \Om_k(m)$ whose digraph $D_\bx$ has  degree sequence $d^\pm$.
We  can generate a random member $\bx$ of $\Omega_k(m, d^\pm)$ using a bipartite version of Bollobas's configuration model \cite{Bol1}. We take a random permutation of the multi-set $\set{d^+(j)\times j:j\in [n]}$ and placing the values in $x_1,x_3,\ldots,x_{2m-1}$ and then take a random permutation of the multi-set $\set{d^-(j)\times j:j\in [n]}$ and placing the values in $x_2,x_4,\ldots,x_{2m}$.  (Here the notation $d\times a$  means that the  corresponding  multi-set contains $d$ copies of $a$.)

For $\bx\in\Omega_k(m)$,  let $L_\bx=\set{j\in[m]:x_{2j-1}=x_{2j}}$ be the set of loops in $D_\bx$, and  let $M_\bx=\{j\in[m]:\exists j'\neq j\text{ s.t. }(x_{2j'-1},x_{2j'})=$ $(x_{2j-1},x_{2j})\}$ define the set of multiple edges in $D_\bx$.
Let $\Omega_{k+1}^*=\set{\bx\in \Omega_{k+1}:L_\bx=M_\bx=\emptyset}$ denote those sequences whose associated digraph is simple. We have that
\beq{sizeD}{
D\in \cdnm{k+1}\text{ implies that }|\set{\bx\in \Omega^*_{k+1}:D_\bx=D}|=m!,
}
as given $\bx=(x_1,x_2,\ldots,x_{2m})$ each permutation of the sequence $s_\bx=((x_{2j-1},x_{2j}):j=1,2,\ldots,m)$ is distinct. Consequently, choosing $\bx$ uniformly from $\Omega_k^*$ and taking $D_\bx$ is equivalent to sampling uniformly from $\cdnm{k+1}$. We show below, see Lemma \ref{LxMx}, that if $m=O(n)$ then $L_\bx=M_\bx=\es$ with probability $\Omega(1)$. Consequently, we only need to show that $D_\bx$ has $k$ edge disjoint Hamilton cycles w.h.p. for the relevant values of $c_k$; as properties which hold w.h.p. in $\Omega_k$ will hold w.h.p. in $\Omega_k^*$.

\paragraph{Generating a random member of $\Omega_k(m)$.}
We  generate the in- and out-degree of  any given $i \in [n]$ as  independent {\em truncated Poisson} random variables, and then condition on total in- and out-degree $m$  using Lemma \ref{Om1} below.

Let $Z=Z(z)$ be a {\em truncated Poisson} random variable with parameter $z$, taking values in $\{ k+1,...\}$. Thus
\beq{Pz=j}{
\Pr(Z=j)=\frac{z^j}{j!f_{k+1}(z)},\hspace{1in}j=k+1,\ldots\ ,
}
where
\[
f_\ell(z)=\sum_{j=\ell}^\infty \frac{z^j}{j!}.
\]
The mean and variance of  $Z(z)$ are
\begin{align*}
\E(Z)&=\r(z)=\frac{zf_{k}(z)}{f_{k+1}(z)},\\
 \Var(Z)&=\s^2(z)=\frac{z^2f_{k-1}(z)}{f_{k+1}(z)}+\frac{zf_{k}(z)}{f_{k+1}(z)}-\frac{z^2f_k(z)^2}{f_{k+1}(z)^2}=O(1).
\end{align*}
We will choose a value for $z$ so that $\r(z)=c$, where $m=cn$. For the following details see e.g.   Lemma A.1 of \cite{BCFF}. That there is a unique value of $z$ such that $\r(z)=c$ for any $c>0$, follows from the fact that the function $\r(z)/z$ is monotone decreasing in $z$ and tends to one from above as $z \rai$, and moreover $z$ satisfies
\beq{c-k}{
c-(k+1)< z <c.
}
We note that if $z$ is sufficiently large and $k$ is constant then
\beq{ez}{
f_{k+1}(z)\geq \tfrac12 e^z,
}
so that probabilities given by \eqref{Pz=j} are at most twice those of an unconditioned $Po(z)$.

The proof of the following lemma is derived from \cite{AFP}
\begin{lemma}\label{Om1}
Let $Z_i',\;Z_i'',i\in [n]$ be independent copies of a truncated Poisson random variable $Z(z)$.
Let $\bx$ be chosen randomly from $\Omega_k$. Then $\{d_\bx^+(i)\}_{j\in [n]}$ is distributed as $\{Z_i'\}_{i\in [n]}$ conditional on $\sum_{j\in [n]}Z_i'=m$ and similarly for $\{d_\bx^-(i)\}_{i\in [n]}$ with respect to $Z_i''$.
\end{lemma}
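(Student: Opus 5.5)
The plan is to compute the probability of a fixed out-degree sequence directly in the uniform model on $\Omega_k$ and compare it with the law of independent truncated Poissons conditioned on their sum; the in-degree case is symmetric. Fix a sequence $d=(d(1),\ldots,d(n))$ with $d(i)\geq k+1$ for all $i$ and $\sum_i d(i)=m$. The odd positions $x_1,x_3,\ldots,x_{2m-1}$ of $\bx\in[n]^{2m}$ determine the out-degrees. The even positions determine the in-degrees. The constraints defining $\Omega_k$ factor: out-degrees depend only on odd positions and in-degrees only on even positions. Hence $\Omega_k$ is a product set $A\times B$, where $A$ is the set of odd-position words with all out-degrees at least $k+1$ and $B$ is the set of even-position words with all in-degrees at least $k+1$. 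A uniform element of $\Omega_k$ therefore has independent, uniform odd and even halves. In particular the out-degree sequence of $\bx$ is the multiplicity vector of a uniform word in $A$.

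The number of words in $[n]^m$ whose multiplicity vector is exactly $d$ is the multinomial coefficient $m!/\prod_i d(i)!$. So
\[
\Pr\brac{d^+_\bx(i)=d(i),\ i\in[n]}=\frac{m!}{|A|\prod_i d(i)!}.
\]

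For the Poisson side, independence gives
\[
\Pr\brac{Z_i'=d(i)\ \forall i}=\prod_i\frac{z^{d(i)}}{d(i)!f_{k+1}(z)}=\frac{z^m}{f_{k+1}(z)^n}\prod_i\frac1{d(i)!},
\]
using $\sum_i d(i)=m$. The prefactor $z^m/f_{k+1}(z)^n$ does not depend on $d$. Therefore, conditional on $\sum_i Z_i'=m$, the law of $(Z_i')$ is proportional to $\prod_i 1/d(i)!$ on the same support, namely sequences with entries at least $k+1$ summing to $m$. This matches the expression above up to a normalising constant, and both are probability distributions on that support, so they coincide. The conditioning event has positive probability as long as $m\geq (k+1)n$, which is implicit in $\Omega_k\neq\es$.

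There is no real obstacle here; the whole proof is a one-line computation. The only points needing care are the observation that $\Omega_k$ is a product set, so the out-degrees can be analysed on their own, and the fact that the $z$-dependence cancels after conditioning. This cancellation is why any $z>0$ works; the choice $\r(z)=c$ matters only later, to make the conditioning event $\sum_i Z_i'=m$ not too unlikely via a local limit theorem.
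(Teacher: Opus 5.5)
Your proof is correct and follows the paper's argument. Both proofs compare the multinomial count $m!/\prod_i \xi_i!$ of words with a given out-degree sequence against the product of truncated Poisson probabilities, where the factor $z^m f_{k+1}(z)^{-n}$ cancels after conditioning. Your remark that $\Omega_k$ is a product of the odd-position and even-position constraint sets justifies a step the paper leaves implicit: that the out-degree marginal is proportional to the multinomial coefficient.
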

\begin{proof}
Let
$$S = \Bigl\{ \bd \in [n]^n \,\Big|\sum_{1\leq j \leq n} d_j = m \text{ and } d_j \geq k+1 \text{ for all } j \Bigr\}.$$
Fix $\boldsymbol{\xi} \in S$.   Then, by the definition of $\bx$ and $d_\bx$,
$$\Pr(d_\bx = \boldsymbol{\xi}) =\left( \frac{m!}{\xi_1! \xi_2! \ldots \xi_n! }\right)\bigg/\left( \sum_{x\in S} \frac{m!}{x_1! x_2! \ldots x_n!} \right).$$
On the other hand, if $\bZ=(Z_1,Z_2,\ldots,Z_n)$ where the $Z_j$ are independent copies of the truncated Poisson random variable $Z$ given in \eqref{Pz=j}, then
\begin{align*}
\Pr \left(\bZ = \boldsymbol{\xi} \; \bigg|\; \sum_{1\leq j \leq n} Z_j =  m\right)
&=\left( \prod_{1\leq j\leq n}{z^{\xi_j}\over  \xi_j! f_{k+1}(z)}\right)
\bigg/\left( \sum_{x\in S} \prod_{1\leq j\leq n}{z^{x_j}\over   x_j! f_{k+1}(z)}\right)\\
&=\left(   { f_{k+1}(z)^{-n}z^{m}\over \xi_1! \xi_2! \ldots \xi_n!} \right)
\bigg/\left( \sum_{x\in S} {f_{k+1}(z)^{-n} z^{m} \over x_1! x_2! \ldots
x_n!} \right)\\
&=\Pr(d_\bx=\boldsymbol{\xi}).
\end{align*}
\end{proof}
It follows from the local central limit theorem, see for example Durrett \cite{Durret}, that
\beq{CLT}{
\Pr\brac{\sum_{j\in [n]}Z_i'=m}=\Pr\brac{\sum_{j\in [n]}Z_i’'=m}=\frac{1}{\sqrt{2\p \s^2n}}(1+O(n^{-1}\s^{-2})).
}
And that for $k=O(n^{1/2}\s)$,
\beq{CLT1}{
\Pr\brac{\sum_{j\in [n]}Z_i'=m-k}=\Pr\brac{\sum_{j\in [n]}Z_i’'=m-k}=\frac{1}{\sqrt{2\p \s^2n}}(1+O((k^2+1)n^{-1}\s^{-2}))
}
We use the following standard inequalities for the tails of the binomial distribution:
\begin{eqnarray}
\Pr(\card{B(n,p)-np}\geq \epsilon np) & \leq & 2e^{-\epsilon^ 2np/3},\hspace{.25in}0\leq \epsilon \leq 1, \label{Ch} \\
\Pr(B(n,p)\geq anp) & \leq & (e/a)^{anp}. \label{LD}
\end{eqnarray}

\subsection{Degree sequence properties of $\Omega_k(m)$}
For $\bx \in \Om_k(m)$ let  $\n_\bx(r,s)=|\set{i \in [n] :d_\bx^-(i)=r,\;d_\bx^+(i)=s}|$ count the vertices of $D_{\bx}$ with in-degree $r$ and out-degree $s$. Let $\cD_1=\set{\n_\bx(r,s)=0\text{ for }r+s>\log n}$, and let
\begin{equation} 
\label{degconc}
\cD_2 (r,s)=\left\{\left|\n_\bx(r,s)-\frac{nz^{r+s}}{ r!s!f_{k+1}(z)^2}\right| \leq K_1\brac{1+\bfrac{nz^{r+s}}{r!s!f_{k+1}(z)^2}^{1/2}}\log n \right\},
\end{equation}
where $z$ is such that $\r(z)=m/n=c$. Let $\cD_2 = { \cup_{k+1 \le r,s}\cD_2(r,s) }$, and let $\cD=\cD_1\cap\cD_2$.

Following Knuth, Motwani, and Pittel \cite{KMP}, we say that
an event $\cE=\cE(n)$ occurs quite surely (q.s., in short) if $\Pr(\cE)=1-
O(n^{-a})$ for any constant $a>0$.
Lemma \ref{Om1}, with (\ref{CLT}) and (\ref{CLT1}) plus our tail estimates \eqref{Ch}, \eqref{LD} for the binomial
distribution show that there is an absolute constant $K>0$ such that the event $\cD=\cD_1\cap\cD_2$ occurs q.s.
\begin{lemma}\label{LxMx}
$\Pr(L_\bx=M_\bx=\es)=\Omega(1)$.
\end{lemma}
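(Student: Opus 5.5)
We condition on the degree sequence and work in the configuration model. By Lemma \ref{Om1} together with the fact that $\cD$ occurs q.s., it is enough to prove the claim for $\bx$ uniform in $\Omega_k(m,d^\pm)$, for every fixed degree sequence $d^\pm$ satisfying $\cD$. For such sequences the maximum degree is at most $\log n$. Moreover, $\sum_i d^+(i)d^-(i)=O(n)$ and $\sum_i d^+(i)^2,\ \sum_i d^-(i)^2=O(n)$, since the empirical joint degree distribution is close to a product of two truncated Poissons with parameter $z=O(c)$. In this model the out-points $x_1,x_3,\ldots$ and the in-points $x_2,x_4,\ldots$ are independent uniform permutations of their multisets. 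So pairing the $j$-th out-slot with the $j$-th in-slot is a uniform random bipartite matching between the $m$ out-half-edges and the $m$ in-half-edges.

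We apply the method of moments via Poisson approximation. Let $L=|L_\bx|$ be the number of loops. Let $P$ be the number of unordered pairs of edges $j\ne j'$ with $(x_{2j-1},x_{2j})=(x_{2j'-1},x_{2j'})$, so that $M_\bx=\es$ iff $P=0$. We then compute the expectations:
\[
\E L=\sum_i \frac{d^+(i)d^-(i)}{m}\to\l_1,\qquad \E P=\sum_{u\neq v}\frac{\binom{d^+(u)}{2}\binom{d^-(v)}{2}\cdot 2}{m(m-1)}\to\l_2 .
\]
Here $\l_1=\E(Z'Z'')/c=\r(z)^2/c=c$, using $\cD_2$ to replace the sums by $n$ times expectations up to $o(n)$ errors. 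Likewise $\l_2=\E\binom{Z'}{2}\E\binom{Z''}{2}\cdot 2/c^2=O(1)$. Loops occurring in multiple form, and the $u=v$ terms, only contribute $O(\log^4 n/n)$. The key point is that both limits are constants because $m=cn$ and the degrees have bounded second moments.

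Next we show that the joint factorial moments converge: $\E[(L)_a(P)_b]\to \l_1^a\l_2^b$ for each fixed $a,b$. Write the factorial moment as a sum over ordered collections of $a$ distinct loop-configurations and $b$ distinct double-edge configurations, each of which prescribes a set of half-edge pairings. If the configurations use pairwise disjoint vertices, the probability that all prescribed pairings occur is $\prod 1/(m-t)$ over the $a+2b$ pairings, which is $m^{-(a+2b)}(1+O(1/n))$. Summing over such collections recovers $(\E L)^a(\E P)^b(1+o(1))$. Collections in which configurations share a vertex lose at least one free vertex sum, a factor $\Theta(n)$, while gaining at most a polylogarithmic factor from degrees up to $\log n$. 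Their total contribution is therefore $O(\log^{C}n/n)=o(1)$. It follows that $(L,P)$ converges to independent Poissons, so
\[
\Pr(L=P=0)\to e^{-\l_1-\l_2}>0 .
\]
Since $\cD$ holds q.s., averaging over degree sequences gives $\Pr(L_\bx=M_\bx=\es)=\Omega(1)$.

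The main obstacle is controlling the overlapping terms in the factorial moments. They must be negligible uniformly over degree sequences in $\cD$, and the tool for this is the bound $r+s\le\log n$ from $\cD_1$. If only a lower bound is needed, a simpler alternative is available: apply a Janson-type or switching argument to show $\Pr(L=P=0)\ge e^{-\l_1-\l_2}-o(1)$, which uses only the first two moments.
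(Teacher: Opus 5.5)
Your proposal is correct and takes essentially the same route as the paper. Both arguments condition on a degree sequence in $\cD$, work in the bipartite configuration model, and use factorial moments to show that the numbers of loops and of double edges are asymptotically independent Poisson with bounded means; your $\lambda_1=\rho^2/c=c$ matches the paper's computation.

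The differences are only in bookkeeping. You compute the joint factorial moments $\E[(L)_a(P)_b]$ directly and absorb overlapping configurations (double loops, triple edges) through the $\cD_1$ degree bound. The paper instead excludes double loops and triple edges up front, and gets independence by computing the moments of $|M_\bx|$ conditional on $L_\bx=\es$.
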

\begin{proof}
We use the method of moments to show that $|L_\bx|,|M_\bx|$ are asymptotically independently Poisson with bounded mean. We first rule out some technicalities. Let $\cF_1$ be the event that there are two loops on the same vertex and let $\cF_2$ be the event that there is a triple edge. Let $\cB=(\neg\cD)\cup\cF_1\cup\cF_2$. Then as $\cD$ occurs q.s., $\Pr(\cB)=o(1)$ follows from
\begin{align*}
\Pr( \cF_1 \cap \cD)&\leq \sum_{i=1}^{n}\binom{m}{2}\frac{d_\bx(i)^4}{m^4}\leq \frac{n \log^4 n}{m^2}=o(1).\\
\Pr( \cF_2 \cap \cD)&\leq  \sum_{1\leq i<j\leq n}\binom{m}{3}\frac{d_\bx(i)^3d_\bx(j)^3}{m^6}\leq \frac{ n^2\log^6n}{m^3}=o(1).
\end{align*}
{\bf Explanation:} we justify the bound on $\Pr( \cF_1 \cap \cD)$. Fix a degree sequence in $\cD$. If there is a double loop then there are two edge indices $p,q$ and a vertex index $i$ such that $x_{2p-1}=x_{2p}=x_{2q-1}=x_{2q}=i$. The $\binom{m}{2}$ counts the choices of $p,q$ and for a fixed $i$. the probability that each of the 4 components equal $i$ is bounded by $(d_\bx(i)/m)^4$. The bound on $\Pr( \cF_2 \cap \cD)$ is proved similarly.

If we condition on the degree sequence $d^\pm$, then as observed above,  then a random  digraph $D_\bx$, $\bx \in \Om_k(m,d^\pm)$ can be generated in a bipartite version of the configuration model.

Now fix a positive integer $\ell$ and only consider single loops and double edges. Then
\begin{align*}
\E\brac{\binom{|L_\bx|}{\ell}}&=\frac{1}{\ell!}\sum_{\substack{S\subseteq [n]\\|S|=\ell}}\frac{1}{m_{(\ell)}}\prod_{i\in S}{d_\bx^+(i)d_\bx^-(i)}\\
&=\frac{1}{\ell!}\brac{\sum_{i=1}^n\frac{d_\bx^+(i)d_\bx^-(i)}{m}}^\ell+O\bfrac{\log^4n}{m}\\
&\sim \frac{1}{\ell!}\brac{\sum_{r,s=k+1}^{\infty}\frac{1}{c}\frac{r s z^{r+s}}{ r!s!f_{k+1}(z)^2}}^\ell=\frac{\r^{2\ell}}{c^\ell\ell!},
\end{align*}
Thus,
\beq{Lpos}{
\Pr(L_\bx=\es)\sim e^{-\r^2/c}.
}
We now show that $|M_\bx|$ is also asymptotically Poisson and asymptotically independent of $L$.
So, let $\ell=O(1)$ and let $\cM_\ell$ denote the set of collections of configuration points making up $\ell$
double edges and let $F$ denote the whole random pairing. Then
\begin{align*}
\E\brac{\binom{|M_\bx|}{\ell}\bigg|\,|L_\bx|=0}&=\sum_{\cM_\ell}\Pr(\cM_\ell\subseteq F\mid |L_\bx|=0)\\
&=\sum_{\cM_\ell}\frac{\Pr(|L_\bx|=0\mid \cM_\ell\subseteq F)\Pr(\cM_\ell\subseteq F)}{\Pr(|L_\bx|=0)}.
\end{align*}
Let
\[
\cC=\set{(S,\f(S)):S,T \seq[n],|S|=|T|=\ell, T=\f(S), \f \text{ bijection}}
\]
Now because $\ell=O(1)$, we see that the calculations that give us \eqref{Lpos} will give us\\
 $\Pr(|L_\bx|=0\mid \cM_\ell\subseteq F)\sim {\Pr(|L_\bx|=0)}$. So,
\begin{align*}
\E\brac{\binom{|M_\bx|}{\ell}\bigg|\,|L_\bx|=0}&\sim \frac1{\ell!}\sum_{\cM_\ell} \Pr(\cM_\ell\subseteq F)\\
&=\frac1{\ell!} \sum_{(S,\f(S))\in \cC} \;\frac1{m_{(2\ell)}}\;\prod_{i\in S} {2\binom{d_\bx^+(i)}{2}\binom{d_\bx^-(\f(i))}{2}}\\
&=\frac{1}{\ell!}\sum_{i,j=1}^n\bfrac{(d_\bx^+(i))_{(2)}(d_\bx^-(j))_{(2)}}{2m^2}^{\ell}+ O\bfrac{\log^8n}{m}\\
&\sim \frac{1}{\ell!}\brac{\sum_{r,s=k+1}^{\infty}\frac{r_{(2)} s_{(2)}z^{r+s}}{2c^2 r!s!f_{k+1}(z)^2}}^{\ell}
=\frac{1}{\ell!}\bfrac{z^2f_{k-1}(z)}{cf_{k+1}(z)}^{2\ell}.
\end{align*}
Thus,
\beq{Lpos1}{
\Pr(M_\bx=\es\mid L_\bx=\es)\sim e^{-z^2f_{k-1}(z)/cf_{k+1}(z)}.
}
and the lemma follows from \eqref{Lpos} and \eqref{Lpos1}.
\end{proof}
The next lemma bounds the degree of a set of vertices in $D_\bx$.
\begin{lemma}\label{degs}
Let $\eta=ez$, and define the event $\cF$ by
\beq{N(S)}{
\cF=\set{d_\bx(S)\leq \eta|S|\log(n/|S|) \text{ for all $S$ with }k\leq |S|\leq n/2.}
}
Then $\cF$ holds w.h.p.
\end{lemma}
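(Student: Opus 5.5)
Replace the degrees by independent truncated Poissons via Lemma \ref{Om1}, run a union bound over sets $S$, and pay the polynomial factor from \eqref{CLT} when conditioning back on the totals.

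\emph{Reduction to independent variables.} By Lemma \ref{Om1}, the out-degrees of $D_\bx$ are distributed as independent copies $Z_i'$ of $Z(z)$ conditioned on $\sum_i Z_i'=m$, and the in-degrees likewise via $Z_i''$. By \eqref{CLT} each conditioning event has probability $\Omega(n^{-1/2})$. Hence for any event $\cE$ depending on the degree sequence,
\[
\Pr(\cE\text{ in }\Omega_k)\le O(n)\,\Pr(\cE\text{ for independent }Z_i',Z_i'').
\]
So it suffices to show that the failure probability of $\cF$ in the independent model is $o(1/n)$. For a fixed $S$ with $|S|=s$, the quantity $d_\bx(S)$ becomes $X_S=\sum_{i\in S}(Z_i'+Z_i'')$, a sum of $2s$ independent truncated Poissons.

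\emph{Tail bound for fixed $S$.} I would use the moment generating function. For $\lambda>0$,
\[
\E e^{\lambda Z}=\frac{f_{k+1}(ze^\lambda)}{f_{k+1}(z)}\le \frac{e^{ze^\lambda}}{f_{k+1}(z)}\le 2e^{z(e^\lambda-1)},
\]
using \eqref{ez}. Markov's inequality then gives
\[
\Pr\bigl(X_S\ge t\bigr)\le 2^{2s}\exp\bigl(2sz(e^\lambda-1)-\lambda t\bigr).
\]
Take $t=\eta s\log(n/s)$ with $\eta=ez$, and set $\lambda=\log(t/(2sz))=\log(e\log(n/s)/2)$. This choice is at least of order $\log\log(n/s)\ge\log\log 2$, though for $s$ close to $n/2$ we may instead fix $\lambda=1$. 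The bound becomes
\[
4^s\exp\bigl(t-2sz-t\log(t/2sz)\bigr)\le 4^s\bigl(2e\cdot 2sz/t\bigr)^t\cdot e^{-2sz}\cdots,
\]
which is essentially $(e\cdot 2sz/t)^t=\bigl(2/\log(n/s)\bigr)^{ez s\log(n/s)}$ up to the $4^s$ factor. This is the same shape as the large-deviation bound \eqref{LD} with $a\approx e\log(n/s)/2$.

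\emph{Union bound.} There are $\binom ns\le (ne/s)^s=\exp(s(1+\log(n/s)))$ choices of $S$. Write $L=\log(n/s)\ge\log 2$. The term for size $s$ is at most
\[
\exp\Bigl(s(1+L)+s\log 4-ezsL\bigl(\log(eL/2)-1\bigr)-2sz+ezsL\Bigr)
\]
under the optimized $\lambda$. When $L$ is large the dominant negative part is $ezsL\log L$, which beats $sL$ easily. When $L$ is bounded (that is, $s=\Theta(n)$), I instead take $\lambda$ to be a fixed constant. Then $\lambda t=\lambda ezsL$ against $2sz(e^\lambda-1)+s(1+L)+s\log4$ needs to be controlled. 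Since $z$ is large by \eqref{c-k} (as $c\ge c_k$), and $L\ge\log 2$, choosing $\lambda=1$ gives a negative exponent $-s(e z L-2z(e-1))$ plus $O(sL)$. This is negative only if $eL>2(e-1)$, i.e. $L>1.26$. That fails near $s=n/2$.

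\emph{Main obstacle.} The main obstacle is therefore the regime $s=\Theta(n)$ with $n/s$ close to $2$, where the constant $\eta=ez$ has little slack over the mean $2zs$. One fix is to compare with the truth: since $d_\bx([n])=2m=2cn$ exactly and $c<z+k+1$, we have $d_\bx(S)\le 2m\le 2(z+k+1)s\cdot(n/s)$. For $s\ge n/2$ this gives at most $4(z+k+1)s$, while $\eta sL\ge ez s\log 2\approx1.88zs$. That is not enough, so I would optimize $\lambda$ honestly. The exponent per vertex is $-\sup_\lambda\bigl(\lambda ezL-2z(e^\lambda-1)\bigr)+O(1+L)$, i.e. $-2z\,h\bigl(eL/2\bigr)+O(1+L)$ with $h(a)=a\log a-a+1>0$ for $a\ne1$. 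At $L=\log2$ we have $a=e\log2/2\approx0.94<1$, so this is a lower tail and the bound fails.

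So the event $\cF$ as literally stated may be false near $s=n/2$. I would therefore expect the proof to handle $s\le \epsilon n$ for small constant $\epsilon$ via the union bound above, where $a=eL/2>1$ with a large margin. It must then use a cruder argument for larger $s$, such as the upper bound $d_\bx(S)\le 2m$, possibly requiring the paper's intended range to be adjusted. Within the small-$s$ range, the remaining step is routine: sum over $s\ge k$ and get $\sum_s \exp(-\Omega(zsL\log L))=o(1/n)$. For constant $s$ this needs $zk\log\log n\gg\log n$, which does not hold. So for small $s$ I would instead intersect with $\cD_1$ (maximum degree at most $\log n$), which holds q.s.; then $d_\bx(S)\le s\log n\le \eta s\log(n/s)$ whenever $s\le n^{1-1/(ez)}$. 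The union-bound argument covers the remaining $s$, where $sL\log L\gg\log n$.
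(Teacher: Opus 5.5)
Your diagnosis is correct, and you can make it airtight instead of saying the statement ``may be false''. With the paper's definition $d_\bx(i)=d^+_\bx(i)+d^-_\bx(i)$ and $d_\bx(S)=\sum_{i\in S}d_\bx(i)$, the event $\cF$ fails for \emph{every} $\bx\in\Omega_k(m)$. Take $S$ to be the $s$ vertices of largest total degree. Averaging gives $d_\bx(S)\geq 2ms/n=2cs$. On the other hand, $\eta s\log(n/s)<2cs$ whenever $\log(n/s)<2c/(ez)$, and since $z<c$ this holds for all $s\geq e^{-2/e}n\approx 0.48n$. A failing union bound does not by itself show this, but the averaging argument does; you were one step from it when you invoked $d_\bx([n])=2m$. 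So the lemma as literally written cannot be proved.

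The paper's proof is the same union bound as yours. It bounds $\sum_{d_1+\cdots+d_s=D}\prod_i z^{d_i}/d_i!$ by $(zs)^D/D!\leq (ezs/D)^D$ instead of using the moment generating function, which comes to the same thing. The difference is that it uses \emph{one} truncated Poisson per vertex, with a leading factor $2$. In other words, it is really proving the bound separately for $d^+_\bx(S)$ and $d^-_\bx(S)$. That is also all Lemma~\ref{pm} needs: $\wh D$ there is a one-sided degree of $N_i(S)$, and only $s\leq\zeta_1 n$ matters.

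For a one-sided degree the mean is about $zs$, so in your notation $a=eL\geq e\log 2\approx 1.88>1$ on the whole range. The union bound then closes for all $k\leq s\leq n/2$. The per-vertex factor is at most about $(2en/s)e^{-z}L^{-ezL}$. This is $e^{-\Omega(z)}$ for bounded $L$ (on $[\log 2,1]$ use $-L\log L\leq 0.26$) and $e^{-\Omega(zL\log L)}$ for large $L$. Equivalently, the total-degree version holds with $2\eta$ in place of $\eta$.

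The paper's write-up has two small defects. Its last line replaces $z$ by $c-(k+1)$ in the exponent, which is legitimate only when $\log(n/s)\geq 1$; for $n/e<s\leq n/2$ one must keep $z$ as above. It also omits the $O(n)$ cost of conditioning on the degree totals, which you correctly include; this is harmless because the sum is $n^{-\Omega(z)}$.

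Your repair, restricting to $s\leq\epsilon n$, is valid and suffices for the application. However, it gives up a range that is recovered for free once you see the actual source of the failure: $\eta=ez$ is calibrated for a one-sided degree (mean $\approx zs$), not for the total degree (mean $\approx 2cs$). The size of $S$ is not the real issue.

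One slip: for constant $s$ you say the union bound needs $zk\log\log n\gg\log n$. You dropped a factor of $L$. With $L\approx\log n$, the exponent $zsL\log L$ is about $zs\log n\log\log n$, so the term is $n^{-\Omega(zs\log\log n)}$. That is $o(1/n)$ even after the $\binom ns\leq n^s$ and $O(n)$ factors. Your detour through $\cD_1$ is valid (it gives the bound deterministically for $s\leq n^{1-1/(ez)}$) but unnecessary.
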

\begin{proof}
\begin{align*}
\Pr(\neg \cF)&\leq 2\sum_{s=k}^{n/2}\binom{n}{s}\sum_{D\geq \eta s\log(n/s)}\;\sum_{d_1+\cdots+d_s=D}\;\prod_{i=1}^s\frac{z^{d_i}}{d_i!f_k(z)}\\
&=2\sum_{s=k}^{n/2}\binom{n}{s}\frac{1}{f_k(z)^s}\sum_{D\geq \eta s\log(n/s)}\frac{z^D}{D!}\sum_{d_1+\cdots+d_s=D}\frac{D!}{d_1!d_2!\cdots d_s!}\\
&=2\sum_{s=k}^{n/2}\binom{n}{s}\frac{1}{f_k(z)^s}\sum_{D\geq \eta s\log(n/s)}\frac{z^Ds^D}{D!}\\
&\leq2\sum_{s=k}^{n/2}\binom{n}{s}\frac{1}{f_k(z)^s}\sum_{D\geq \eta s\log(n/s)}\bfrac{zes}{D}^D\\
&\leq3\sum_{s=k}^{n/2}\brac{\frac{ne}{sf_k(z)}\cdot\bfrac{ze}{\eta \log(n/s)}^{\eta\log(n/s)}}^s\\
&\leq 3\sum_{s=k}^{n/2}\brac{\frac{2ne^{2+k-c}}{s}\cdot\bfrac{1}{ \log(n/s)}^{e(c-(k+1))\log(n/s)}}^s=o(1).
\end{align*}
In the last line we used \eqref{ez} and \eqref{c-k}.
\end{proof}

\subsection{Phase 0. An edge partition}\label{phase0}

Our input will be a random $\bx\in \Omega_k(m)$. Phase 0  partitions the $m$ edges of $E_\bx$ into four subsets $E_1,E_2,E_3,E_4$ each with different purpose.

For $i=1,2,\ldots,3k$ let $\wh E_{1,i}$ be a random subset of $[m]\sm \bigcup_{j=1}^{i-1}\wh E_{1,j}$ where each edge is chosen independently with probability $p_i=1/(4k-i+1)$. Note that $\E(|\wh E_1|)=m/4k$ and that for $i=2,\ldots,k$,
\begin{equation}\label{E-size}
\E(|\wh E_{1,i}|)=mp_i\prod_{j=1}^{i-1}(1-p_j)=\frac{m}{4k-i+1}\prod_{j=1}^{i-1}\frac{4k-j}{4k-j+1}=\frac{m}{4k}.
\end{equation}
After this we rename $\wh E_{1,k+i}$ as $\wh E_{2,i}$ for $i=1,2,\ldots,k$ and we rename $\wh E_{1,2k+i}$ as $\wh E_{3,i}$ for $i=1,2,\ldots,k$. Finally we define $E_4=E_\bx\setminus (\wh E_1\cup \wh E_2\cup \wh E_3)$.

Recall that $c=m/n$ is the average expected degree, and let \SM\ denote the set of vertices with in-degree or out-degree at most $c/8k$ in $D_\bx$ or in $\wh E_{t,i}$ for some $1\leq t\leq 3,1\leq i\leq k$.

Let $E_\SM$ denote the set of edges incident with \SM\ in $D_\bx$ and let $E_{j,i}=\wh E_{j,i}\cup E_\SM$ for $j=1,2,3$.
\begin{lemma}\label{smalldegree}
$|\SM|\leq ne^{-c/100k}$ w.h.p. for all $i$.
\end{lemma}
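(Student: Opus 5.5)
We bound $\E|\SM|$ by a sum of per-vertex probabilities and then apply Markov's inequality. By Lemma \ref{Om1} (with \eqref{CLT}), conditioning on total in- and out-degree $m$ costs at most a factor $O(\sqrt n)$ for events defined by a single vertex. It is cleaner, though, to use the event $\cD$, which holds q.s. On $\cD_2$, the number of vertices with $d^-_\bx(i)=r$ or $d^+_\bx(i)=s$ agrees with the truncated Poisson prediction up to additive error $O((1+\sqrt{\cdot})\log n)$.

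\textbf{Small degree in $D_\bx$ itself.} The vertices of in- or out-degree at most $c/8k$ number at most
\[
2n\sum_{j\le c/8k}\frac{z^j}{j!f_{k+1}(z)}+O(c\sqrt n\log n).
\]
Using \eqref{ez} and $z>c-(k+1)$ from \eqref{c-k}, the sum is at most twice $\Pr(Po(z)\le c/8k)$. This is at most $e^{-z/3}\le e^{-c/4}$ for $c$ large, by a standard Poisson lower tail bound (the Chernoff bound \eqref{Ch} in Poisson form).

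\textbf{Small degree in some $\wh E_{t,i}$.} By \eqref{E-size}, each edge of $E_\bx$ falls into $\wh E_{t,i}$ independently of the degree sequence with overall marginal probability $1/4k$. Given $d^+_\bx(v)=d$, the out-degree of $v$ in $\wh E_{t,i}$ is distributed as $B(d,1/4k)$. The memberships are nested-independent, but each edge's membership in a fixed $\wh E_{t,i}$ is still an independent Bernoulli$(1/4k)$ trial across edges.

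We split into two cases according to the degree of $v$ in $D_\bx$.
\begin{itemize}
\item If $d\ge c/2$, then \eqref{Ch} with $\epsilon=1/2$ gives $\Pr(B(d,1/4k)\le c/8k)\le 2e^{-c/96k}$.
\item If $d<c/2$, then $v$ is counted with probability at most $\Pr(Po(z)<c/2)\le e^{-c/10}$ by the previous step (again via $\cD$).
\end{itemize}
A union bound over the $3k$ sets $\wh E_{t,i}$ and the two directions gives
\[
\E|\SM|\le n\bigl(2e^{-c/4}+6k(2e^{-c/96k}+e^{-c/10})\bigr)+o(n).
\]
This is at most $\tfrac12 ne^{-c/100k}$ once $c\ge c_k$ is large enough, since $e^{-c/96k}$ beats $e^{-c/100k}$ by a factor $e^{-c/2400k^2}$, which can absorb $12k$.

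\textbf{Concentration.} Markov's inequality alone only gives constant probability, so we need concentration. Each vertex's membership in $\SM$ is determined by its own degrees and the random labels on its incident edges. Changing the label of one edge affects at most two vertices, and on $\cD_1$ all degrees are at most $\log n$. McDiarmid's bounded differences inequality over the $m$ independent edge labels, conditional on $\bx$, gives concentration within $O(\sqrt{n}\log n)$. The dependence through $\bx$ is already controlled by $\cD_2$. Hence $|\SM|\le ne^{-c/100k}$ w.h.p.

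The main obstacle is this last step: handling the fact that the $\wh E_{t,i}$ are chosen sequentially. I would resolve it by noting that the assignment of each edge to one of the $3k+1$ classes is a single independent random label, so bounded differences applies directly with a Lipschitz constant of $2$ per edge.
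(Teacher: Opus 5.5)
Your per-vertex estimate for the sets $\wh E_{t,i}$ contains a false step. In the case $d\ge c/2$ you apply \eqref{Ch} with $\epsilon=1/2$. That bounds $\Pr(B(d,1/4k)\le d/8k)$, and this event contains $\{B(d,1/4k)\le c/8k\}$ only when $d\ge c$.

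For $c/2\le d<c$ the claimed inequality $\Pr(B(d,1/4k)\le c/8k)\le 2e^{-c/96k}$ fails. At $d=c/2$ the mean of $B(d,1/4k)$ is exactly $c/8k$, so the probability is about $1/2$. Moving the split to $d\ge c$ does not help either: the truncated Poisson degree with parameter $z\approx c$ satisfies $d<c$ with probability about $1/2$. In fact the dominant contribution to $\Pr(v\in\SM)$ comes from $d\approx c(1-1/8k)$, which is exactly the range your two cases mishandle. Your final number happens to be true, but the case analysis does not prove it.

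The clean repair is not to condition on $d$ at all and to use Poisson thinning instead. This is exactly \eqref{PoutS}--\eqref{Prob} with $s=1$ and $p=1/4k$: the in- or out-degree of $v$ in $\wh E_{t,i}$ has point probabilities at most $2\Pr(Po(z/4k)=\cdot\,)$. Since $z>c-k-1$, the threshold $c/8k$ is essentially half the mean $z/4k$. So \eqref{Ch} gives a bound of at most $4e^{-c/50k}$ for large $c$; the exact Poisson rate is about $e^{-0.038c/k}$. This comfortably absorbs the factor $6k$ from your union bound against $e^{-c/100k}$. A two-case split can also be salvaged, but only with a tuned split point $ac$ with $a$ around $0.8$, and the constants then become tight.

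With that substitution your argument is correct, and it differs from the paper's. The paper works in the unconditioned product model, where for each type of smallness the count is a sum of independent indicators. Chernoff then gives failure probability $e^{-\Omega(n)}$, and a polynomial factor from \eqref{CLT}, \eqref{CLT1} pays for conditioning on $m$ edges. You instead handle the degree sequence through the q.s.\ event $\cD$ and the edge labels by bounded differences given $\bx$. Your observation that each edge carries a single independent label in $\{1,\dots,3k+1\}$, giving Lipschitz constant $2$, is right. The paper's route is shorter; yours avoids the exponential-tail-then-inflate step and treats in- and out-degrees simultaneously.
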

\begin{proof}
Before conditioning on $|\bx|=2m$ we have that each vertex has the distribution given by \eqref{Pz=j}. Let $W_i$ be distributed as $Z$ given in \eqref{Pz=j}, and let $Y_i=Bin(W_i,p_i)$. Now $\Pr(Z\leq c/8k)$
 is easily shown to be less than $e^{-c/99k}$.  The Chernoff bound then shows that $\Pr(|\SM|\geq 2ne^{-c/99k})\leq e^{-\Omega(n)}$.  To account for conditioning on $m=cn$, we inflate this upper bound by $O(n)$ using \eqref{CLT}, \eqref{CLT1}. This proves the lemma.
\end{proof}
Let $W$ be distributed as $Z$ given in \eqref{Pz=j}, and let $Y=Bin(W,p)$ for $0<p<1$. Fix $s$  and let $\p(s,D)=\Pr(Y_1+Y_2+\cdots+Y_s=D)$ where $Y_1,Y_2,\ldots,Y_s$ are independent copies of $Y$.  If $d_i$ is the value of $Y_i$ then

\begin{align}
\p(s,D)&= \sum_{d_1+\cdots+d_s=D}\prod_{i=1}^s\sum_{w_i\geq d_i \atop w_i \ge k+1}\frac{z^{w_i}}{w_i!f_{k+1}(z)}\,\binom{w_i}{d_i}p^{d_i}(1-p)^{w_i-d_i}\label{PoutS}\\
&\leq\sum_{d_1+\cdots+d_s=D}\prod_{i=1}^s\frac{(pz)^{d_i}}{f_{k+1}(z)d_i!}\sum_{w_i\geq d_i}\frac{(z(1-p))^{w_i-d_i}}{(w_i-d_i)!}\nn\\
&=\sum_{d_1+\cdots+d_s=D}\prod_{i=1}^s\frac{(pz)^{d_i}e^{(1-p)z}}{f_{k+1}(z)d_i!}\nn\\
&=\frac{(pz)^De^{(1-p)zs}}{f_{k+1}(z)^sD!}\sum_{d_1+\cdots+d_s=D}\frac{D!}{d_1!d_2!\cdots d_s!}\nn\\
&=\frac{e^{(1-p)zs}}{f_{k+1}(z)^s}\frac{(pzs)^D}{D!}.\label{Prob}
\end{align}

As  $D_\bx$ has exactly $m$ edges, we also need the following result.
\begin{lemma}\label{degS}
Let $Y_\ell, \ell \in [n]$ be distributed as in the definition of $\p(s,D)$.
Let $Z_\ell,\ell\in[n]$ be truncated Poisson variables, as given in \eqref{Pz=j}. Fix a set $X\subseteq \{1,3,\ldots,2m-1\}$ where $i\in X$ independently with probability $p$.
Then  $\wh \p(s,D)=\p(s,D)/ P_{s,D,m}$ upper bounds the probability that a set $S \seq \{1,3,\ldots,2m-1\}$ of size $s$ has $D$ total occurrences in the set $X$,
where
\beq{bpsdm}{
P_{s,D,m}=\begin{cases}\Omega(n^{-1/2})&\text{for all }s.\\
1+o(1)&s\leq \log^2n.\end{cases}
}
\end{lemma}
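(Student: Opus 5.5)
The plan is to compare the true model, in which the out-degrees are $Z_1,\dots,Z_n$ conditioned on $\sum_\ell Z_\ell=m$ (Lemma \ref{Om1}), with the unconditioned product model, and then apply the usual conditional probability bound. Fix a set $S$ of $s$ vertices. We view the random subset $X$ as independent $p$-thinning of the $m$ edges. Given the degrees $Z_\ell$, the number of occurrences of $\ell\in S$ in $X$ is then $\mathrm{Bin}(Z_\ell,p)=Y_\ell$, and these are independent across $\ell$. In the unconditioned model the event $\set{\sum_{\ell\in S}Y_\ell=D}$ therefore has probability exactly $\p(s,D)$, and \eqref{Prob} bounds it.

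For the conditioned model we write
\[
\Pr\Big(\sum_{\ell\in S}Y_\ell=D\;\Big|\;\sum_{\ell\in[n]}Z_\ell=m\Big)\le \frac{\Pr\big(\sum_{\ell\in S}Y_\ell=D\big)\cdot \max_{t}\Pr\big(\sum_{\ell\notin S}Z_\ell=m-t\big)}{\Pr\big(\sum_{\ell\in[n]}Z_\ell=m\big)}.
\]
This holds by conditioning on the values $(Z_\ell,Y_\ell)_{\ell\in S}$, since $\sum_{\ell\notin S}Z_\ell$ is independent of them. So it suffices to show that the ratio
\[
\frac{\Pr\big(\sum_{\ell\in[n]}Z_\ell=m\big)}{\max_t\Pr\big(\sum_{\ell\notin S}Z_\ell=m-t\big)}
\]
is $\Omega(n^{-1/2})$ in general and $1+o(1)$ when $s\le\log^2n$. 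We then call this ratio $P_{s,D,m}$.

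By \eqref{CLT}, the numerator is $\Theta(n^{-1/2})$. The denominator is a point probability of a sum of i.i.d. truncated Poisson variables, so it is at most $1$. This already gives the first case, $P_{s,D,m}=\Omega(n^{-1/2})$ for all $s$.

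For $s\le\log^2 n$, apply the local CLT to $n-s$ summands. Its maximum point probability is $(1+O(1/n))/\sqrt{2\pi\s^2(n-s)}=(1+o(1))/\sqrt{2\pi\s^2n}$, because local CLT point probabilities are maximised near the mean up to a $1+O(1/n)$ factor. Comparing with \eqref{CLT} gives ratio $1+o(1)$.

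The only delicate point is the upper bound on $\max_t$ in the small-$s$ case. For $t$ far from $m-(n-s)c$ the point probability is even smaller, and the local CLT bound $\sup_x\Pr(\cdot=x)\le (1+o(1))/\sqrt{2\pi\s^2 (n-s)}$ holds uniformly for lattice variables with bounded third moment. We would cite this uniform version of \eqref{CLT1} rather than only the $k=O(n^{1/2}\s)$ range.
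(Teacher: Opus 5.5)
Your proof is correct and follows the paper's route: decompose over $\Delta=\sum_{i\in S}Z_i$, use that $\sum_{j\notin S}Z_j$ is independent of the $S$-variables, bound the complementary point probability by $1$ for the general $\Omega(n^{-1/2})$ case, and compare local-CLT point probabilities when $s$ is small. The only difference is that you invoke the uniform local limit theorem (supremum over all lattice points), whereas the paper restricts to $\Delta\le s\log n$ ``w.h.p.'' and uses \eqref{CLT1}; your version is slightly cleaner, since it bounds the ratio by $1+o(1)$ for every $\Delta$ at once (indeed for all $s=o(n)$), so no large-$\Delta$ terms have to be absorbed separately.
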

\begin{proof}
The display \eqref{PoutS}-\eqref{Prob} counts the out-degree $Y_j$ of  vertices $j \in S$, generated by  truncated Poisson variables $Z_j$, where each edge is subsequently retained with probability $p$. This is the probability that the odd coordinate lies in $X$. Thus $\p(s,D)$ is the probability the final total out-degree of the edges of $S$ is $D$. We must now account for conditioning on $\sum_{i=1}^nZ_i=m$.

As $\p(s,D)=\sum_{\D \ge D} \Pr(\sum_{i \in S}Z_i=\D  \text{ and } \sum_{i \in S} Y_i=D)$,
\[
\wh \p(s,D)=  \sum_{\D \ge D} \Pr\brac{\sum_{i \in S}  Z_i=\D  \text{ and } \sum_{i \in S} Y_i=D }\;
 \frac{\Pr(\sum_{j \not \in S} Z_j=m-\D)}{\Pr(Z_1+\cdots+Z_n=m)} \le \frac{\p(s,D)}{ P_{s,D,m}},
 \]
say.
By \eqref{CLT}, $\Pr(Z_1+\cdots+Z_n=m) =\Om(1/\sqrt n)$ for any $s$, lower bounding $P_{s,D,m}$. For small $s$, as $\D \le s \log n$ w.h.p.,
then for $s =o(\sqrt n /\log n)$ using \eqref{CLT} and \eqref{CLT1} gives $P(s,D,m)=1+o(1)$.
\end{proof}
\section{Proof of Theorem \ref{main}}
In the first phase we show that w.h.p. there is a collection $\c{C}_1$ of $k$ edge-disjoint cycle covers, $\{\cC_{1,1},... \cC_{1,k}\}$. Each $\cC_{1,i}$ comprises $O(\log n)$ vertex disjoint directed cycles (a {\em cycle cover} being a set of vertex disjoint cycles that include every vertex). Then in the second phase we transform $\c{C}_1$ into a collection of (similarly indexed)  cycle covers $\c{C}_2$ in which each cycle $C\in \c{C}_2$ has size at least $n/\log n$. We then transform $\cC_2$ into a collection of $k$ edge disjoint Hamilton cycles.
\subsection{Phase 1. Edge disjoint perfect matchings}
Our aim here is to construct $k$, edge disjoint perfect matchings $M_i,i\in[k]$.
For each $M_i$ we do this in steps using the edge sets $E_{1,i}, E_{2,i}$.
In summary, we will prove the following lemma:

\begin{lemma}\label{added}
W.h.p., $E_1\cup E_2\cup E_\SM$ contains $k$ edge disjoint perfect matchings $M_1,M_2,\ldots,M_k$. Individually, each matching is uniformly random, but obviously not collectively.
\end{lemma}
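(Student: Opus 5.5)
We view a perfect matching in the bipartite double cover of $D_\bx$ (out-copies $[n]^+$, in-copies $[n]^-$, one edge $(u^+,v^-)$ per directed edge $(u,v)$) as a 1-factor of $D_\bx$, i.e.\ a cycle cover. The matchings $M_1,\dots,M_k$ will be built one at a time. For $M_i$ we use only $E_{1,i}\cup E_{2,i}$ together with $E_\SM$, minus the edges already used by $M_1,\dots,M_{i-1}$. The sets $\wh E_{1,i},\wh E_{2,i}$ are disjoint for distinct $i$, so only edges of $E_\SM$ can be claimed twice. The plan is Hall's theorem plus a greedy-then-augment argument.

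\emph{Step 1 (vertices in \SM).} By Lemma~\ref{smalldegree}, $|\SM|\le ne^{-c/100k}$. We first show w.h.p.\ that \SM\ is sparse: no set $S$ with $|S|\le n/\log n$ spans more than $(1+\e)|S|$ edges. This uses a first-moment bound in the configuration model together with Lemma~\ref{degs}. Each $v\in\SM$ has in- and out-degree at least $k+1$ in $D_\bx$. Hence after removing at most $k-1$ previously used edges at $v$, it keeps at least two edges on each side. Sparseness lets us pre-assign, for each $M_i$, a distinct edge at every vertex of \SM\ on each side, in the manner of a bounded-degree orientation. This is done so that the $k$ choices are edge disjoint and vertex conflicts among \SM-vertices never occur.

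\emph{Step 2 (Hall for the rest).} Remove the endpoints of the pre-assigned edges and let $\Gamma_i$ be the bipartite graph on the remaining vertices with edges $\wh E_{1,i}\cup\wh E_{2,i}$. Each remaining vertex has degree at least $c/8k$ on each side in each $\wh E_{t,i}$, since it lies outside \SM. We must rule out Hall violators $S\subseteq[n]^+$ with $|N(S)|<|S|$. For $|S|=s\le n/2$, the requirement that every vertex of $S$ sends at least $c/8k$ of its $\wh E_{1,i}$-edges into a set $T$ of size $s-1$ is handled by a union bound over $S,T$. We use \eqref{Prob} with $p=1/4k$, corrected by Lemma~\ref{degS} (losing at most the $O(\sqrt n)$ factor in \eqref{bpsdm}). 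The configuration probability that $D\ge cs/8k$ edge endpoints land in $T$ is at most $(d(T)/m)^{D}$, and $d(T)\le\eta s\log(n/s)$ by Lemma~\ref{degs}. This gives terms of the form $\binom ns^2(C s\log(n/s)/n)^{cs/8k}$, which sum to $o(1)$ for $c\ge c_k$ large. Sets with $s>n/2$ are handled by the complementary condition on $[n]^-$.

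\emph{Main obstacle and uniformity.} The main obstacle is the range of very small $s$ combined with $E_\SM$. Here the $O(\sqrt n)$ loss in Lemma~\ref{degS} is unaffordable, so we use the $1+o(1)$ case of \eqref{bpsdm} for $s\le\log^2 n$ and the configuration bound elsewhere. We must also check that the pre-assignment in Step 1 does not destroy expansion near \SM. For this we again invoke sparseness: each non-\SM\ vertex loses at most $O(1)$ neighbours to \SM, which is negligible against $c/8k$. Finally, for uniformity, we choose $M_i$ uniformly among the perfect matchings of the final graph. By the symmetry of the configuration model under relabelling the permutation of in-points, each individual $M_i$ is then distributed as a uniform random matching, i.e.\ a uniform permutation. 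The $M_i$ are edge disjoint by construction but correlated, as the statement says.
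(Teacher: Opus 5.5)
\textbf{There is a genuine gap where your Steps 1 and 2 meet.} After pre-assigning edges at \SM\ you delete their endpoints. You then assert that every remaining vertex still has degree about $c/8k$, and justify this by sparseness ("each non-\SM\ vertex loses $O(1)$ neighbours"). Sparseness cannot give this, because a vertex together with its \SM-neighbours forms a star, which is sparse.

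The claim is in fact false. A vertex $u\notin\SM$ has only $\Theta(c/k)$ out-neighbours in $\wh E_{1,i}\cup\wh E_{2,i}$, and each lies in \SM\ with probability $e^{-\Theta(c/k)}$. So with probability at least a constant $e^{-O(c^2/k^2)}$, independent of $n$, all of them lie in \SM. Hence w.h.p.\ there are linearly many such $u$.

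You pre-assign an in-edge at every $w\in\SM$, so every such $w^-$ is deleted. Therefore $u^+$ is isolated in $\Gamma_i$ unless one of these $w$ happened to pick $(u,w)$, and your orientation rule does nothing to arrange that. Hall's condition then fails already for singletons.

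The paper never deletes vertices. Its $G_i$ keeps all of $E_\SM$ minus $M_1,\dots,M_{i-1}$, so every vertex retains degree at least $k+2-i\geq 2$. Lemma \ref{pm} is proved using only that lower bound together with the truncated-Poisson degree law.

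\textbf{Second, your union bound does not cover linear-size sets, even with a valid minimum degree.} Lemma \ref{degs} gives $d(T)/m\le \eta s\log(n/s)/(cn)$. At $s=n/e$ this equals $z/c\ge 1-(k+1)/c$ by \eqref{c-k}. So the factor $(d(T)/m)^{cs/8k}$ is only $e^{-O(s)}$, with a constant that does not improve as $c$ grows, and it cannot beat $\binom{n}{s}^2=e^{\Theta(n)}$. Your sum is therefore not $o(1)$ for $s=\Theta(n)$.

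This is exactly why the paper proves expansion only for $|S|\le \zeta_1 n$ and never checks Hall for large sets. It instead uses Lemma \ref{boost}. If a maximum matching is not perfect, small-set expansion forces $|B(v)|\ge\zeta_1 n$, which gives roughly $\zeta_1^2n^2$ booster pairs. The reserved edges of $E_{2,i}$ are then exposed one at a time, and each is a booster with probability $\gtrsim\zeta_1^2$. So the $\sim m/4k$ of them supply the at most $n$ augmentations needed.

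Your plan mentions a greedy-then-augment step but never carries it out. Folding $\wh E_{2,i}$ into the Hall graph also discards exactly the fresh randomness that the booster argument needs.

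Your uniformity argument via relabelling symmetry is fine.
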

There is a natural bijection between digraphs and bipartite graphs. Given a digraph $D$ with vertex set $[n]$, we can define a bipartite graph $G(D)$ with vertex partition $=\set{a_i:i\in[n]}$, $B=\set{b_j:j\in [n]}$ and an edge $\set{a_i,b_j}$ for every edge $(i,j)$ of $D$. If $D$ satisfies $\min\set{\d^-,\d^+}\geq k+1$ then $G(D)$ has minimum degree at least $k+1$ and vice-versa. Let $G_\bz=G(D_\bz)$ for $\bz\in \Omega_k$.

So now, let $G_{i},i=1,2,\ldots,k$ be bipartite graphs with vertex sets $A=[n]$ and $B=[n]$. The edges of $G_{i}$ consist of the $A:B$ edges of 
$(E_{1,i}\cup E_{\SM})\sm \bigcup_{j=1}^{i-1}M_j$.
Thus degrees in $ G_i$  must be at least $k+2-i$, as they have dropped by at most $i-1$ due to the deletion of $M_1,\ldots,M_{i-1}$.

A set of vertices $S$ is monocolored if $S\subseteq A$ or $S\subseteq B$. For  monocolored $S$,  define $N_i(S)$ to be the set of neighbors of $S$ in $G_{i}$ respectively. We let $D$ denote the total degree of $S$ in $G_i$ and we let $\wh D\geq D$ denote the total degree of $N_i(S)$. As every vertex has at least $k+2-i$ remaining neighbours, we can assume that $|S|=s\geq k+2-i$; and similarly   we can assume $D\geq (k+2-i)s$

The existence of $M_i$ will follow from a modification of Hall's theorem.
\begin{lemma}\label{pm}
Let $\z_1$ be a fixed positive constant, sufficiently small to justify any inequality below. Let $\cH_i$ be the event that there  exists a monocoloured set $S$ in $G_i$ with $|S|\leq \zeta_1n$ such that $|N_i(S)|<|S|$. Then
\beq{M1}{
\Pr\brac{\bigcup_{i=1}^k\cH_i}=o(1).
}
\end{lemma}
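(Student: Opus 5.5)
We prove \eqref{M1} by a first-moment union bound over minimal bad configurations, for each fixed $i\in[k]$, and then sum over the $k$ values of $i$. Fix $i$ and suppose $\cH_i$ holds. Take $S$ monocoloured, say $S\subseteq A$ (the case $S\subseteq B$ is symmetric, with in- and out-degrees swapped), of minimal size with $|N_i(S)|<|S|\le \z_1 n$. Put $T=N_i(S)$. By minimality we may assume $|T|=|S|-1$, and every vertex of $T$ has at least two neighbours in $S$. Every vertex has degree at least $k+2-i\ge 2$ in $G_i$, so $s=|S|\ge 3$, and all $D\ge (k+2-i)s$ edges of $G_i$ at $S$ go into $T$. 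We then split according to whether $S$ meets \SM.

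\textbf{Case 1: $S\cap\SM=\es$.} Every vertex of $S$ has out-degree greater than $c/8k$ in $\wh E_{1,i}$. Deleting the matchings $M_1,\ldots,M_{i-1}$ costs each vertex at most $k$ edges. So the $\wh E_{1,i}$-edges leaving $S$ number at least $D\ge s(c/8k-k)$, and all of them land in $T$, where $|T|=s-1$. We bound the probability with Lemma~\ref{degS} and \eqref{Prob}, taking $p=p_i$, which is $\Theta(1/k)$. The edges leaving $S$ have $D$ endpoints, each lying in $T$. In the configuration model, conditional on the degree sequence, this has probability at most $(d_\bx(T)/m)^D$. On the event $\cF$ of Lemma~\ref{degs}, $d_\bx(T)\le \eta s\log(n/s)$. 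Summing gives the estimate
\[
\sum_{s\ge 3}^{\z_1 n}\binom ns^2\sum_{D\ge s(c/8k-k)} \frac{\wh\p(s,D)}{1}\bfrac{\eta s\log(n/s)}{cn}^{D}
\le \sum_s \bfrac{ne}{s}^{2s} O(n^{1/2})\,\bfrac{C s\log(n/s)}{n}^{cs/10k}.
\]
From \eqref{Prob}, $\wh\p(s,D)\le O(\sqrt n)\,(pzs)^D/D!\cdot e^{-pzs}$ up to constants. The $(pzs)^D/D!$ factor is absorbed, since it is at most $e^{pzs}$. Take $c$ large compared with $k$. Then for $s\le \log^2 n$ (where $P_{s,D,m}=1+o(1)$, so the $\sqrt n$ factor is absent) the terms are $O((s/n)^{\Omega(cs/k)-2s})$, whose sum is $o(1)$. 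For larger $s$ up to $\z_1 n$, the factor $(C\z_1\log(1/\z_1))^{cs/10k}$ beats $(e/\z)^{2s}$ and the $\sqrt n$, provided $\z_1$ is small and $c$ is large. This is routine.

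\textbf{Case 2: $S\cap \SM\ne\es$.} Here $E_\SM\subseteq G_i$, so every edge of $D_\bx$ at $S\cap\SM$ is present except at most $k-1$ matching edges. Hence each such vertex keeps at least $k+2-i$ neighbours, but they are all confined to $T$. We use the structure of \SM: by Lemma~\ref{smalldegree} it has size at most $ne^{-c/100k}$. We also need that w.h.p. small sets are not dense. Concretely, we show that w.h.p. no set $U$ with $|U|\le \z_1 n$ spans at least $(1+\e)|U|$ edges when at least a constant fraction of its vertices lie in \SM, and that two \SM\ vertices are never close. Both facts follow from the standard first-moment computation $\binom{n}{u}\binom{u^2}{(1+\e)u}(C/n)^{(1+\e)u}=o(1)$, combined with \SM\ being a sparse random-like set. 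Now consider $U=S\cup T$, with $|U|=2s-1$. It spans at least $\sum_{v\in S}$ of the $G_i$-degrees, that is at least $2s$ edges, since every vertex has $\ge 2$ neighbours. This is a contradiction once we also use the contribution of the non-\SM\ vertices of $S$, each of which contributes $\Omega(c/k)$ edges as in Case 1.

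\textbf{Main obstacle.} The hard part is Case 2 for tiny $s$, where vertices of \SM\ have only about $k+1$ edges. There the argument must rely on the density bound: $|U|=2s-1$ vertices spanning $\ge (k+2-i)s\ge 2s$ edges. The delicate case is $i=k$, where the degree is only $2$. For it we would first try the refined count that vertices of \SM\ are at distance at least $3$ from each other w.h.p., which forces the neighbours in $T$ to have large degree and reduces the situation to Case 1.
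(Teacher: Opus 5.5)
\textbf{There is a gap in Case 2.} Your Case 1 is essentially sound, up to bookkeeping: only the $\wh E_{1,i}$-edges of $S$ not used by $M_1,\ldots,M_{i-1}$ are forced into $T$, which costs a harmless binomial factor. Your density argument also handles $i<k$, since then $U=S\cup T$ carries at least $3s$ edges on $2s-1$ vertices.

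The gap is Case 2 with $i=k$; for $k=1$ this is the whole lemma. The bad configuration is when all but a fraction $O(k\e/c)$ of $S$ are \SM\ vertices of $G_k$-degree $2$. Then $U$ has $2s-1$ vertices and possibly only about $2s$ edges, i.e.\ density $1+O(1/s)$. No $(1+\e)$-density statement excludes this once $s$ exceeds a constant, and the non-\SM\ vertices are too few to add the missing edges.

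Your proposed remedy, that \SM\ vertices are pairwise far apart, is false. Since $c$ is a constant, a constant fraction of vertices have out-degree exactly $k+1$. So $|\SM|=\Theta(n)$ (Lemma~\ref{smalldegree} is only an upper bound), and there are linearly many pairs of \SM\ vertices that are adjacent or share a neighbour. Nor can you use ``\SM\ is a sparse random-like set'' as a black box. Membership in \SM\ is determined by the very degrees whose edges you are forcing into $T$.

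\textbf{The missing idea.} You must pay for the low degrees of $S$ inside the first-moment sum, jointly with the placement of their edges, rather than through a structural property. That is the paper's proof, and it needs no case split. It bounds $\Pr(\cH_i)$ by \eqref{cH}. There $\p(s,D)$ from \eqref{Prob} carries the factor $e^{(1-p)zs}/f_{k+1}(z)^s\approx e^{-pzs}$, which is the cost of $s$ vertices having total degree only $D$. The sum over $\wh D$, weighted by $\p(s-1,\wh D)$, averages $(\wh D/m)^D$ over the degrees of $T$.

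After reducing to $D=d_0=(k+2-i)s$, the powers of $n$ cancel down to $(s/n)^{1+(k-i)s}$. What remains per vertex is $O(1)\,e^{-pc/2}$, which beats the $e^{O(s)}$ entropy of choosing $(S,T)$ for $c$ large, including when $i=k$.

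If you patch Case 2 along these lines, do not use the crude bound $d_\bx(T)\le\eta s\log(n/s)$ from $\cF$ when only $2s$ edges are forced. It leaves a factor $(e\log(n/s))^{2s}$, which a constant saving per vertex of $S$ cannot absorb once $s$ is of order $\log n$. You need the averaging over the degrees of $T$ instead.
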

\begin{proof}
Referring to Lemma  \ref{degS} above, $\wh \p(s,D)=\p(s,D) / P_m$. Then, with $p=p_i$,
\beq{cH}{
\Pr(\cH_i)\leq \frac 2{P_m^2} \sum_{s=k+2-i}^{\z_1n}\binom{n}{s}\binom{n}{s-1}
\sum_{\wh D\geq D\geq (k+2-i)s}\p(s,D)\p(s-1,\wh D)\bfrac{\wh D}{m}^D
}
We first obtain an expression for  \eqref{cH} ignoring the $1/P_m$ factor, and subsequently apply Lemma \ref{degS} to give the required upper bound.

The contribution of $D$ to \eqref{cH} is $f(D,\wh D)=(pzs)^D\wh D^D/(D!m^D)$.  Using Lemma \ref{degs},
\[
\frac{f(D+1,\wh D)}{f(D,\wh D)}=\frac{pzs \wh D}{(D+1)m}\leq \frac{\eta pcs^2\log(n/s)}{(k+2-i)sm}\leq \frac12,
\]
for small enough $\zeta_1$. In which case we can fix $D=d_0=(k+2-i)s$  and
\begin{equation}\label{Rats}
\sum_{d \ge d_0} f(d,\wh D) \le O(1) \;\frac{(pzs)^{d_0}}{ d_0!} \;\bfrac{\wh D}{m}^{d_0}.
\end{equation}
We define $g(d)=(pzs)^d d^{d_0}/d!$ and then observe that for $d\geq d_0$,
 \begin{align*}
 \frac{g(d+1)}{g(d)}=&\frac{pzs}{d+1}\bfrac{d+1}{d}^{d_0}\text{ and so }\frac{g(d)}{g(d_0)}\leq \frac{d_0!(cps)^{d-d_0}\bfrac{d}{d_0}^{d_0}}{d!}.
 \end{align*}
Now $\wh D \ge D = d_0$. So,
 \begin{align*}
 \sum_{\wh  D\geq d_0}\p(s-1,\wh D)\bfrac{\wh D}{m}^{d_0}&= \frac{e^{(1-p)zs}}{f_{k+1}(z)^{s-1}m^{(k+2-i)s}}\sum_{d\geq d_0}g(d)\\
 &\leq \frac{e^{(1-p)zs}g(d_0)}{f_{k+1}(z)^{s-1}m^{(k+2-i)s}} \sum_{d\geq  d_0}\frac{d_0!(cps)^{d-d_0}\bfrac{d}{d_0}^{d_0}}{d!}\\
 &\leq \frac{e^{(1-p)zs}g(d_0)}{f_{k+1}(z)^{s-1}m^{(k+2-i)s}} \sum_{d\geq  d_0}\frac{d_0!(cps)^{d-d_0}e^{d/e}}{d!}\\
 &\leq \frac{e^{(1-p)zs}e^{cps+e^{1/e}+d_0/e}}{f_{k+1}(z)^{s-1}m^{(k+2-i)s}}\cdot\frac{g(d_0)d_0!}{(cps)^{d_0}}\\
 &\leq
 \frac{e^{(1-p)zs}e^{cps+e^{1/e}+d_0/e}d_0^{d_0}}{f_{k+1}(z)^{s-1}m^{(k+2-i)s}}.
 \end{align*}

From \eqref{Rats} and \eqref{cH}, noting the bound on $\p(s,D)$ from \eqref{Prob}, with $d_0=(k+2-i)s$ we get
\begin{align}
\Pr(\cH_i)&\leq \sum_{s=k+2-i}^{\zeta_1n}\frac{(en)^{2s-1}}{s^s(s-1)^{s-1}}\cdot \frac{e^{
(1-p)zs}(pcs)^{(k+2-i)s}}{f_{k+1}(z)^{s}((k+2-i)s)!}\cdot \frac{e^{(1-p)zs}e^{cps+e^{1/e}+d_0/e}d_0^{d_0}}{e^{(k+2-i)s}f_{k+1}(z)^{s-1}m^{(k+2-i)s}}\nn\\
&\leq \sum_{s=k+2-i}^{\zeta_1n}\frac{n^{2s-1}s^{(k+2-i)s}}{n^{(k+2-i)s}s^s(s-1)^{s-1}} \bfrac{e^{2+ 2(1-p)c+cp+(k+2-i)(1+1/e)}p^{k+2-i}}{f_{k+1}(z)^{2-1/s}}^s\label{brackets}\\
&\leq \sum_{s=k+2-i}^{\zeta_1n}\frac{s^{1+(k-i)s}e^{-pcs/2}}{n^{1+(k-i)s}}=O(n^{-1+o(1)})+
\sum_{s=\log^2n}^{\zeta_1n}\frac{s^{1+(k-i)s}e^{-pcs/2}}{n^{1+(k-i)s}}=O(n^{-1+o(1)}),\nn
\end{align}
for $c$ sufficiently larger than $k$.  This is the unconditioned probability, and we apply Lemma \ref{degS} to give $\Pr\brac{\bigcup_{i=1}^k\cH_i}=o(1)$,  completing the proof of the lemma.

In \eqref{brackets} note that $e^{2+(k+2-i)(1+1/e)} p^{k+2-i)}$ can be bounded by a constant independent of $c$. The remaining terms inside the big bracket $e^{2(1-p)z+cp}/f_{k+1}(z)^{2-1/s}$ can be bounded by $e^{-pc/2}$.
\end{proof}
To complete the proof of Lemma \ref{added} we use the following lemma  (Lemma 6.3 of \cite{FK}) using the edges of $E_2$.
We note that if $\cH=\bigcup_{i=1}^k\cH_i$ is as defined in Lemma \ref{pm} and $\cH$ does not hold before adding edges of $E_2$, then a fortiori $\cH$ does not hold after adding edges. 
\begin{lemma}\label{boost}
Let $G$ be a bipartite graph without a perfect matching. Let $M$ be a maximum cardinality matching and suppose that $v$ is not covered by $M$. Let $B(v)$ be the set of vertices $w$ for which there exists some maximum matching (not necessarily $M$) that does not cover $v,w$. Then $|N_G(B(v))|<|B(v)|$. \qed
\end{lemma}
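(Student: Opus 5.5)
Throughout, I read $B(v)$ as a monocoloured set, as in Lemma \ref{pm}: if $v\in A$, then $B(v)\subseteq A$ consists of the vertices $w$ of $A$ for which some maximum matching leaves both $v$ and $w$ uncovered, and $v$ itself is included. This reading matters. Since $|A|=|B|$ and the matching is not perfect, there are always uncovered vertices on the other side too. If those were admitted into $B(v)$, the inequality could fail.

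The plan is to identify $B(v)$ with an alternating-path reachability set and then apply the usual König/Hall injection. Fix the maximum matching $M$, and let $F\subseteq A$ be the set of $M$-uncovered vertices of $A$, so $v\in F$. Let $X\subseteq A$ be the set of vertices of $A$ reachable from some vertex of $F$ by an even $M$-alternating path, one that starts with a non-matching edge. Then $F\subseteq X$.

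\textbf{Step 1: $N_G(X)$ injects into $X\setminus F$.} Let $y\in N_G(X)$, and let $P$ be an even alternating path ending at a neighbour $x\in X$ of $y$.
\begin{itemize}
\item If $y$ were $M$-uncovered, then $P$ followed by the edge $xy$ would be an $M$-augmenting path, contradicting the maximality of $M$. So $y$ is matched.
\item Its partner $M(y)$ is then reached by extending $P$ through $xy$ and $yM(y)$, so $M(y)\in X$.
\item Finally $M(y)\notin F$, because $M(y)$ is covered by $M$.
\end{itemize}
Thus $y\mapsto M(y)$ is an injection $N_G(X)\to X\setminus F$. This gives $|N_G(X)|\le |X|-|F|<|X|$.

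\textbf{Step 2: $X=B(v)$.} This is the step I expect to be the main obstacle.

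For $X\subseteq B(v)$, take $w\in X$, reached by an even alternating path $P$ from some $u\in F$.
\begin{itemize}
\item If $u\ne v$, then $M\triangle P$ is a maximum matching that misses $w$. The vertex $v$ is not on $P$: it is uncovered, so it can only be an endpoint of an alternating path. Hence $M\triangle P$ misses both $v$ and $w$.
\item If $u=v$ for every such path, I would instead use a second uncovered vertex $u'\in F\setminus\{v\}$ when one exists. If $F=\{v\}$, the relevant set collapses to the vertices reachable from $v$, and the statement is read with $v$ playing the role of the freed vertex.
\end{itemize}
I would check this degenerate case carefully against the precise formulation in \cite{FK}.

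For $B(v)\subseteq X$, let $M'$ be a maximum matching missing $w$. Look at the component of $M\triangle M'$ containing $w$. Both matchings are maximum, so this component is an even path. It starts at $w$ with an $M$-edge and ends at an $M$-uncovered vertex of $A$. Read backwards, it is an even $M$-alternating path from a vertex of $F$ to $w$, so $w\in X$.

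\textbf{Conclusion.} Combining the two steps gives $|N_G(B(v))|=|N_G(X)|<|X|=|B(v)|$. The strict inequality comes from $v$ lying in $F$, since $F\subseteq X$ is excluded from the image of the injection in Step 1.
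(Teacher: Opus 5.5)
\textbf{Gap: Step 2 fails, and under your reading the statement itself is false.} Your set $X$ consists of the vertices of $A$ missed by \emph{some} maximum matching. Membership in $B(v)$ instead requires \emph{one} maximum matching that misses both $v$ and $w$. On the same side these two conditions can be incompatible.

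The simplest case is $|F|=1$. Then every maximum matching misses exactly one vertex of $A$, so your $B(v)=\{v\}$, and the inequality would force $v$ to be isolated. Concretely, take $A=\{a_1,a_2\}$, $B=\{b_1,b_2\}$, edges $a_1b_1,a_2b_1$, $M=\{a_1b_1\}$ and $v=a_2$. Then $X=\{a_1,a_2\}$, but $B(v)=\{a_2\}$ and $|N_G(B(v))|=|\{b_1\}|=1=|B(v)|$. So your treatment of $F=\{v\}$ (``read with $v$ playing the role of the freed vertex'') changes the statement rather than proving it.

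A second uncovered vertex does not rescue $X\subseteq B(v)$ either. Take $A=\{v,a_1,a_2,u\}$, $B=\{b_1,b_2,b_3,b_4\}$, edges $vb_1,vb_2,a_1b_1,a_2b_2$, and $M=\{a_1b_1,a_2b_2\}$. Here $a_1\in X$ is reachable only from $v$, and the only maximum matching missing $v$ is $M$. So your $B(v)=\{v,u\}$, and again $|N_G(B(v))|=2=|B(v)|$.

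\textbf{The fix: put $w$ on the opposite side.} This is the version that is true, and it is the one the paper actually uses: from $x_j\in A$ it extracts sets $Y_j\subseteq B$ of vertices $y$ such that some maximum matching isolates $x_j$ and $y$. These are exactly the vertices you excluded, so your remark that admitting them could break the inequality is backwards.

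With $v\in A$, let $F_B$ be the set of $M$-uncovered vertices of $B$; it is nonempty because $|A|=|B|$ and $M$ is not perfect. Let $X_B$ be the set of vertices of $B$ reachable from $F_B$ by even alternating paths.
\begin{itemize}
\item The inclusion $X_B\subseteq B(v)$ is immediate. An even alternating path $P$ from $b_0\in F_B$ to $w$ has both endpoints in $B$. So the uncovered vertex $v\in A$ does not lie on $P$, and $M\triangle P$ misses both $v$ and $w$.
\item Your symmetric-difference argument gives $B(v)\subseteq X_B$.
\item Your Step 1 injection, run on the $B$ side, gives $|N_G(X_B)|\le |X_B|-|F_B|<|X_B|$.
\end{itemize}
So your two ingredients are the right ones. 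They just need to be aimed at the other colour class, where the troublesome case of Step 2 never arises.
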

Suppose at some stage we have found $\ell-1<k$ edge disjoint perfect matchings $M_1,\ldots,M_{\ell-1}$ in $G_j, j < \ell$ and  a maximum matching $M'_\ell$ in $G_\ell$ that is not perfect and which we next try to augment. As before, to avoid using edges of \SM\ twice we  update 
$(E_{2,\ell}\cup E_{\SM})\sm \bigcup_{j=1}^{\ell-1}M_j$ at the start of augmenting $M'_\ell$.

It follows from Lemmas \ref{pm} and \ref{boost} that in $G_\ell$ there exists a set $X_A=\{x_1,x_2,\ldots,x_r\}\subseteq A,r\geq \z_1n$ and sets $Y_j \seq B,\,j=1,2,\ldots,r$, such that (i) $|Y_j|=r,\;j=1,2,\ldots,r$ and (ii) if $y\in Y_j$ then there is a maximum matching that isolates $x_j$ and $y$. So, if we can add any such edge $(x_j,y)$ then we can increase the maximum matching size.  We call such edges {\em boosters}.

Each matching $\ell=1,...,k$ has a separate edge set $E_{2,\ell}$ available to augment it. To augment $M'_\ell$  we examine the  edges of $E_{2,\ell}\sm E_\SM=\set{f_1,f_2,\ldots,f_\m}$, $\m\gtrsim m/4k$ in sequential order. An edge $f_j$ is a booster with probability at least $\eta_0\geq (r^2-n|\SM|)/n^2\gtrsim\zeta_1^2$. So there is a probability of at least $\eta_0$ that $f_j$ will increase the size of the current matching. To obtain a perfect matching, we require at most $n$ successes. 
However, for sufficiently large $m$, it holds that $\Pr(Bin(m/4k,\eta_0)<n)=o(1)$. To find $k$ perfect matchings we need 
to repeat this at most $k$ times,  and as $k$ is constant this means that w.h.p. we can add the required booster edges to construct $k$ perfect matchings.

\paragraph{The matchings are random}
 It can be shown by symmetry that these cycle covers implied by the perfect matching $M_i$ of $G_i$ can be taken to be uniform over  covers of their corresponding digraphs $D_i$. (Not of course independent.) Indeed, given $\wh D_i$ and a permutation $\p$ of  $V_i$ we define $\p D_i=(V_i,\set{(i,\p(j)):(i,j)\in E( D_i)}$. Let $\p$ be a uniform random permutation of $V_i$. Suppose that we check for a cycle cover in $D_i$ by looking for a cycle cover $\wh \cC_{1,i}$ in $\p D$ and then taking $\p^{-1}\wh \cC_{1,i}$ to be our cycle cover $\cC_{1,i}$ in $\wh D_i$. Note that $\p^{-1}\wh \cC_{1,i}$ is uniformly random, no matter what value $\cC_{1,i}$ takes. Finally note that $\p D_i$ has the same distribution as $D_i$ so that $\Pr(D_i\text{ has a cycle cover})=\Pr({\p D_i}\text{ has a cycle cover})$. A random cycle cover has at most $2\log n$ cycles w.h.p. and so we have that w.h.p. each of these $k$ cycle covers consist of at most $2\log n$ cycles.

\subsection{Phase 2. Minimum cycle length $n/\log n$}
In this phase we transform each $\wh\cC_i$  into a cycle cover $\cC_i^*$ in which each cycle has size at least $n/\log n$. We prove that Phase 2 succeeds w.h.p. as stated in the following lemma.
\begin{lemma}\label{Phase2}
With probability $1-o(1)$ at
the end of Phase 2 we have edge disjoint cycle covers $\wh\cC_i^{*},i=1,2,\ldots,k$ in which the minimum cycle length is at least $n_0=n/\log n$.
\end{lemma}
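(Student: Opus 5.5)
We process the cycle covers $\wh\cC_1,\ldots,\wh\cC_k$ one at a time, using only the edges of $E_{3,i}$ (together with $E_\SM$, minus edges already used by other covers) to repair $\wh\cC_i$. Recall from Phase 1 that $\wh\cC_i$ is a uniformly random cycle cover with at most $2\log n$ cycles w.h.p. A uniform random permutation has $O(\log n)$ cycles and the number of vertices in cycles of length below $n_0$ is at most $O(n_0\log n)$. More precisely, the expected number of cycles of length $<n_0$ is $\sum_{\ell<n_0}1/\ell\le \log n$, so w.h.p. there are at most $2\log n$ \emph{small} cycles. The plan is to eliminate small cycles one by one. Each elimination merges the small cycle $C$ into the remaining structure while not creating any new small cycle, and without increasing the number of small cycles otherwise. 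After at most $2\log n$ successful steps for each $i$, and $k$ covers in total, we are done.

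\textbf{The elimination step.} Fix a small cycle $C$ with $|C|<n_0$ and a vertex $u\in C$ with successor $u^+$. We use the standard out/in-tree (rotation) argument adapted to digraphs. Delete the cycle edge $(u,u^+)$ to obtain a path $P$ from $u^+$ to $u$. Then search, using $E_{3,i}$-edges out of the endpoint, for an edge $(u,w)$ with $w$ on a cycle $C'\ne C$ of length at least $n_0$, or on a large path. If $w$ lies on $C'$ with predecessor $w^-$, we replace $(w^-,w)$ by $(u,w)$ and obtain a path from $w$'s cycle through $C$, ending at $w^-$. To close it we need an edge from $w^-$ back into the start, which is generally unavailable. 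So instead we run a breadth-first rotation process. Each rotation uses an unexamined edge of $E_{3,i}$ from the current endpoint $x$ to some $y$ on the current path, giving a new endpoint $y^-$. Any vertex reached off the path lets us extend by absorbing a whole cycle. Because vertices outside \SM\ have $E_{3,i}$-out-degree at least $c/8k$, the process generates $\Theta(n)$ distinct endpoints in $O(\log n)$ levels (an expansion estimate in the spirit of Lemma \ref{pm}, using \eqref{Prob} and Lemma \ref{degS}). The same holds for start points, by symmetry using in-edges. Finally, w.h.p. there is an $E_{3,i}$-edge (or an edge of $E_4$, held in reserve) joining an end to a start. This closes a cycle of length at least $n_0$, since it contains the absorbed large cycle, or at least $n_0$ vertices. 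Any leftover pieces of split cycles are rejoined in the same manner.

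\textbf{Probabilistic control.} To keep independence we expose edges of $E_{3,i}$ and $E_4$ lazily: each query reveals fresh edges at a vertex, and we stop after $O(\log n)$ levels. The total number of exposed edges over all $O(k\log n)$ repairs is $o(m)$, so each query sees essentially unconditioned edges. Vertices in \SM\ are handled by the high-degree condition of $E_\SM$, together with $|\SM|\le ne^{-c/100k}$ from Lemma \ref{smalldegree}. Then the probability that a given repair fails is $o(1/\log n)$, and a union bound gives success w.h.p. We maintain edge-disjointness across covers by deleting edges used in $\cC_j^*$ from the pools of the others. This removes only $O(1)$ edges per vertex, so degrees stay at least $c/8k-k$.

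\textbf{Main obstacle.} The main obstacle is the expansion claim for the rotation endpoint sets. We must show that, from a small start, the sets of reachable endpoints grow by a factor of order $c$ per level until they are linear in size, despite conditioning on the edges already exposed and despite the low-degree set \SM. I would first attempt this via a Hall-type union bound analogous to \eqref{cH}, showing that no set $S$ with $|S|\le \z_2 n$ has fewer than $2|S|$ rotation-neighbours.
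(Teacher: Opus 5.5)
\textbf{Verdict: genuine gap.} Your overall strategy matches the paper's in spirit: break an edge of a small cycle, explore with fresh $E_{3,i}$-edges, absorb cycles, then close. But the mechanism you describe fails at three points.

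\textbf{Split-off cycles.} In a digraph, a ``rotation'' along $(x,y)$ with $y$ on the current path does not just move the endpoint to $y^-$. Removing $(y^-,y)$ also splits off the cycle $y\cdots x$. You never require this cycle, or the remaining path, to have at least $n_0$ vertices. ``Leftover pieces of split cycles are rejoined in the same manner'' is circular, because each rejoining can create new small cycles, so there is no measure of progress. The paper builds this into the definition of an acceptable move. Condition \textbf{C}(i) requires every new cycle and the new path to have at least $n_0$ vertices, rejecting a child $w$ too close to a path end at cost $O(1/\log n)$ per step. Condition \textbf{C}(ii) forbids used vertices, with $|W|\le n^{3/4}$.

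\textbf{Growing $\Theta(n)$ endpoints and closing.} This step is unproven and contradicts your own exposure claim. P\'osa's lemma is what turns a Hall-type bound into endpoint expansion in undirected graphs, and it does not carry over to digraphs. So a bound like \eqref{cH} on a fixed graph says nothing about the endpoints reachable by directed rotations. Moreover, even one repair that reaches $\Theta(n)$ endpoints exposes $\Theta(m)$ edges, not $o(m)$, leaving no fresh randomness for the closing edge. Closing also needs an edge from the end of a path to the start of \emph{that same} path, not from some end to some start. The paper avoids this by:
\begin{itemize}
\item stopping the out-tree at only $\nu=\sqrt n\log n$ leaves $\U_j$ with ends $v_j$;
\item growing from each leaf an in-tree via edges into the start vertex, noting that the start-vertex layers $L_{j,\ell}$ coincide for all $j$, which controls what is exposed;
\item pruning BAD trees and keeping $\nu/2$ good ones with $\nu$ starts each.
\end{itemize}
Since $\nu^2\gg n$, the probability that no unexposed closing edge exists is at most $(1-\nu/n)^{\nu/2}=o(n^{-1})$.

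\textbf{Failure probability per repair.} Your claim that each repair fails with probability $o(1/\log n)$ is unsupported. If the end of the broken edge, and its first few descendants, lie in \SM, they may have a single usable out-edge. That edge lands on a small cycle with probability of order $1/\log n$. Summed over the roughly $\log n$ small cycles, this gives a constant, not $o(1)$. The paper gets only $O((\log\log n)^2/\log n)$ per attempt. It compensates with two independent attempts (breaking different edges) when $|C|\ge 4$, and by using that only $O(\log\log n)$ cycles have length at most $3$. Relatedly, your bound of $O(n_0\log n)=O(n)$ vertices on small cycles is useless here. You need the Markov bound $n\log\log n/\log n$ (event $\cM_i$) to make hitting a small cycle unlikely.
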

To prove Lemma \ref{Phase2} we use the edges of $E_3$ in an algorithmic construction  which we next describe. We denote the set of $E_{3,i}$-out-neighbors of $v$ by $out(v)$ and $E_{3,i}$-in-neighbors of $v$ by $in(v)$. Let $D_i$ denote the digraph induced by $E_{3,i}\cup M_i$.

Fix $i$ and partition the cycles $\cC_i$ associated with $M_i$ into {\em small} cycles $C$, where $|C| < n_0$ and {\em large} cycles $|C| \geq n_0$.  A cycle cover can be viewed as a  {\em Permutation Digraph} (PD). Starting with the PD $\Pi$ obtained from the current cycle cover,
we define a {\em Near Permutation Digraph} (NPD) to be a digraph obtained from  $\Pi$ by removing one edge. Thus an NPD $\U$ consists of a path $P(\U)$ plus a smaller permutation digraph $PD(\U)$ which covers $[n]\setminus V(P(\U))$.

In a random permutation the expected number of vertices on cycles of length at most $s$ is precisely $s$ (see e.g., \cite{K}). Let
\begin{equation}\label{CalM}
\cM_i =\{\cC_{i} \text{ contains at most } n\log\log n/\log n \text{ vertices on small cycles.} \}
\end{equation}
By the Markov inequality, $\cM_i$ holds w.h.p., and we condition on these $k$ events.

We now give an informal description of a process which removes a small cycle $C$ from a {\em current} PD $\U$. We start by choosing an arbitrary edge $(v_0,u_0)$ of $C$ and then delete it to obtain an NPD $\U_0$ with $P_0=P(\U_0)\in {\cal P}(u_0,v_0)$, where ${\cal P}(x,y)$ denotes the set of paths from $x$ to $y$ in $\G_{3,i}$.

The aim of the process is to produce an {\em out-tree} comprisng a large set $S$ of NPD's such that for each $\U\in S$, (i) $P(\U)$ has at least $n_0$ edges and (ii) the small cycles of $PD(\U)$ are a subset of the small cycles of $\Pi$. We will show that w.h.p. the endpoints of one of the $P(\U)$'s can be joined by an edge to create a permutation digraph with at least one less small cycle.

The Out-Phase consists of a sequence of {\em basic steps.} In a  basic step of the {\em Out-Phase} we have an NPD $\U$ where $P(\U)$ is a path from $u_0$ to another vertex $v$. We examine the edges leaving $v$, i.e., the edges going {\em out} from the endpoint $v$ of the path in $D_i$. Let $w$ be the terminal vertex of such an edge and assume that  $\U$ contains an edge $(x,w)$. Then $\U'=\U\cup\{(v,w)\}\setminus\{(x,w)\}$ is also an NPD with a path $P'$ from $u_0$ to $x$. We say that edge $(x,w)$ and  $\U'$ are {\em acceptable} if (i) $P'(\U')$ contains at least $n_0$ edges and (ii) any new cycle created (i.e. in $\U'$ and not $\U$) also has at least $n_0$ edges. We allow  $x  \in P(\U)$ in this construction. If $x  \in P(\U)$ this replaces the path $P$ by $P'$. If $x \not \in P$ then $P$ extends to $P'$, a longer path. In this context we call $v$ the {\em parent} vertex and $w$ the {\em child} vertex.

As each vertex $w$ has a unique predecessor $x$ in a cycle cover PD, the absence of the required predecessor edge $(x,w)$ in the NPD
$\U$ indicates the special case that $x=v$ and $w=u_0$.  We accept the edge if $P$ has at least $n_0$ edges. This would  successfully end the  iteration, although it is unlikely to occur.

We do not want to look at very many edges in this construction. To avoid this, we build a tree $T_0$ of NPD's in a natural breadth-first fashion where each non-leaf vertex in $\U$ gives rise to one or more NPD offspring $\U'$ as described above. The construction of $T_0$ ends when  the tree first has $\nu=\sqrt n\log n$ leaves. The construction of $T_0$ constitutes an Out-Phase of our procedure to eliminate small cycles.  Having constructed $T_0$ we follow with a further {\em In-Phase}, similar to the Out-Phase, using the other end of the set of paths. If an edge of the digraph forms  a cycle with one of the set of such paths obtained by acceptable moves, we have succeeded in removing a small cycle without creating any new ones. We prove that w.h.p. we  can close at least one of the paths $P(\U)$ to a cycle of length at least $n_0$.

We now increase the formality of our description. For $i=1,...,k$, we start Phase 2i with a PD $\Pi_0$, say, and a general iteration of Phase 2i starts with a PD $\Pi$ whose small cycles are a subset of those in $\Pi_0$. Iterations continue in Phase 2i until there are no more small cycles. At the start of an iteration we choose some small cycle $C$ of $\Pi$, and do the Out-Phase in which we construct a tree $T_0=T_0(\Pi,C)$ of NPD's as follows: the root of $T_0$ is $\U_0$ which is obtained by deleting an edge $(v_0,u_0)$ of $C$.

The set of nodes at depth $t$ is denoted by $S_t$. Let $\U \in S_t$ and $P=P(\U)\in {\cal P}(u_0,v)$. The {\em potential} offspring $\U'$ of $\U$, at depth $t+1$ are defined as follows.

Let $w$ be the terminal vertex of an edge directed from $v$.
\\
{\bf Case 1.} Path Extension. $w$ is a vertex of a cycle $C' \in PD(\U)$ with edge $(x,w) \in C'$.\\
Let $\U'=\U\cup\{(v,w)\}\setminus \{(x,w)\}$, thus extending the path $P$.
\\
{\bf Case 2}. Path Closure. $w$ is a vertex of $P(\U)$.\\
 Either $w = u_0$, or $(x,w)$ is an edge of $P$. In the former case $\U\cup\{(v,w)\}$ is a PD $\Pi'$ and in the latter case we let $\U'=\U\cup\{(v,w)\}\setminus \{(x,w)\}$, making a cycle and a shorter path in $\cP(u_0,x)$.

We  maintain a set $W$ of {\em used} vertices during  Phase 2. At the start of Phase 2,  all vertices  are {\em unused} and $W=\emptyset$. Whenever we examine an edge $(v,w)$, we add both $v$ and $w$ to $W$.
  We do not allow $|W|$ to exceed $n^{3/4}$.

We only admit to $S_{t+1}$ those $\U'$ which satisfy the following conditions, {\bf C}(i) and {\bf C}(ii).

{\bf C}(i) The new cycle formed (Case 2 only) must have
at least $n_0$ vertices, and the path formed (both cases) must either
be empty or have  at least $n_0$ vertices. When the path formed is empty we end the iteration and if necessary start the next with $\Pi'$.

{\bf C}(ii)  The vertices $x,w \not \in W$ at the start of iteration $t+1$.

 We stop an iteration $t$, in mid-phase if necessary, when $|S_t| \in[\nu,3\nu]$. Let us consider a generic step in the growth of $T_0$ where we are extending from $\U$ and $P(\U)\in{\cal P}(u_0,v)$.

The set $S_{t+1}$ is constructed in the following manner. We first examine $out(v), v\in S_t$ in the order that the vertices  $v$ were placed in $S_t$ to see if they produce acceptable edges. Note that in the general description, $v$ is the vertex $x'$ of edge $(x',w')$ broken at iteration $t$. Assuming $x' \not \in W$, then with the exception of the edge $(x',w')$, the out-edges of $v=x'$ are  unexamined.
Considering the edges $(v,w)$, $w \in out(v)$, we then add to $S_{t+1}$ those vertices $x\not\in W$ which arise from $(x,w)$ with $w\not\in W$.

A vertex $w\in out(v)$ is unacceptable if either (i) $w$ lies on $P(\U)$ and is too close to an endpoint so that the new path has length less than $n_0$; this has probability $O(1/\log n)$, or (ii) the corresponding vertex $x$ is in $W$; this has probability bounded above by $O(n^{-1/4})$, or (iii) $w$ lies on a small cycle; which, given the event $\cM_i$ defined in \eqref{CalM}, has probability  $O(\log \log n/\log n)$.

Every vertex will have out-degree at least $k+1-(i-1)\geq 2$ and so when we examine $v$ there is always at least one choice of $w$ that is acceptable w.h.p. Our agument relies on $S_t$ expanding and so we must in particular deal with the case where $w$ is unique. When we build our tree of NPD’s there is always a chance that it contains a path where the sequence $v_1,v_2,\ldots,v_k$ of parent vertices are all in \SM. In the worst-case, these vertices will be of degree 2 and there is no expansion. Such a path is said to be {\em induced}. It follows from Lemma \ref{smalldegree} that the probability there exists an induced path of length $\log\log n$ starting from some fixed NPD is at most $\th_1=\log^{-c/100k}n$ and this probability bound will suffice.

So, we can look at the growth of the out-tree in the following way. At any vertex, if necessary, we first produce an induced path of length at most $\log\log n$ that ends with a $v\notin\SM$. Treating the induced path as an edge, the tree will have a branching factor of at least $(1-o(1))\a$ where $\a=\rdup{c/8k}$. (The $o(1)$ term is of order $O(\log\log n)/\log n)$ due to $w$ being unacceptable. To simplify matters, we can prune the tree and make the branching factor equal to be at most $\a$. So, if grow the out-tree in a breadth first manner, level $t$ will have at least $((1-o(1))\a)^t$ nodes. This provided we stop growing the tree once we have $\n=n^{1/2}\log n$ nodes.

We need to check the probability that we actually achieve the claimed $\n$ nodes. Consider the first $\log\log n$ levels. First of at the root of the tree, there is an $O((\log\log n)^2/\log n)$ chance of failure. This is because at each step of an induced path there is an $O(\log\log n/\log n)$ chance that the child $w$ is unacceptable. So, we can now assume that $|S_1|\geq \a$.

Arguing this way, we see that $|S_{t+1}|\geq \a|S_t|/2$ for $1\leq t\leq \log\log n$ with probability $1-O((\log\log n)^2/\log n)$. So, w.h.p. $|S_{\log\log n}|\geq \log^{\a/2}n$. Now $|S_{t+1}|$ dominates $Bin(|S_t|, 1-O((\log\log n)^2/\log n)$ and so the Chernoff bounds imply that we reach $\n$ nodes with probability $1-O((\log\log n)^2/\log n)$. (Note that the expected growth of $|W|$ will be $O(\n\times \log\log n\times \log\log n/\log n)$. The factor $\log\log n$ comes from induced
paths. Applying the Markov inequality, we see that the increase is at most $\n\log^3n$ with probability at least $1-1/\log^2n)$.)

Therefore with probability $1-O((\log\log n)^2/\log n)$ we can end an Out-phase with out-tree $T_0$ which has leaves $\U_j$, for $j = 1,\ldots,\n$, each with a path of length at least $n_0$, (unless we have already successfully made a cycle). We now execute an In-Phase. This involves the construction of trees $T_j,j=1,2,\ldots \nu$. Assume that $P(\U_j)$ is a path from $u_0$ to $v_j$. We start with $\U_j$ and build $T_j$ in a similar way to $T_0$ except that here all paths we generated end with $v_j$. This is done as follows: if a current NPD $\U$ has $P(\U)\in {\cal P}(u,v_i)$ then we consider adding an edge $(w,u)$ and deleting an edge $(w,x)\in \U$. Thus our trees are grown by considering edges directed into the start vertex of each $P(\U)$ rather than directed out of the end vertex. Some technical changes are necessary however.

We consider the construction of our $\nu$ In-Phase trees in two stages. First of all we grow the trees only enforcing condition {\bf C}(ii) of success and thus allow the formation of small cycles and paths. We try to grow the trees until they have a number of leaves in the range $[\n,\a\n\log n]$. The growth of the $\nu$ trees can be considered to be layer by layer as follows: let $L_{j,\ell }$ denote the set of start vertices of the paths associated with the nodes at depth $\ell$ of the $j$-th tree, $j=1,2\ldots ,\nu, \ell = 0,1,\ldots,t_0$. Thus $L_{j,0}=\{ u_0\}$ for all $j$. We prove inductively that $L_{j,\ell}=L_{1,\ell}$ for all $j,\ell$. In fact if $L_{j,\ell}=L_{1,\ell}$ then the acceptable edges have the same set of initial vertices and since all of the deleted edges are edges (enforced by {\bf C} (ii)) we have $L_{j,\ell +1}=L_{1,\ell +1}$. The fact that $L_{j,\ell}=L_{1,\ell}$ gives us some control over the set of new vertices exposed in the In-Phase.

The probability that we succeed in constructing trees $T_1,T_2,\ldots T_{\nu}$ in this way is\\
 $1-O((\log\log n)^2/\log n)$. (This is the probability that we succeed in constructing the $L_{j,\ell}$.)

We now consider the fact that in some of the trees some of the leaves may have been constructed in violation of {\bf C}(i). We imagine that we prune the trees
$T_1,T_2,\ldots T_\nu$ by disallowing any node that was constructed in violation of {\bf C}(i). Let a tree be BAD if after pruning it has less
than $\nu$ leaves and GOOD otherwise. Now an individual pruned tree has been constructed in the same manner as the tree $T_0$ obtained in the
Out-Phase. Thus
$$\Pr(T_1 \mbox{ is BAD})=O\left( {(\log \log n)^2\over \log n}
\right) $$
and
$$\E(\mbox{number of BAD trees}) = O\left( {\nu(\log \log n)^2\over \log n}\right) $$
and
$$\Pr(\exists \geq \nu/2 \mbox{ BAD trees})=O\left({(\log \log n)^2\over \log n}
\right) .$$
Thus
\begin{align*}
&\Pr(\exists <\nu/2 \mbox{ GOOD trees after pruning})\\
&\leq \Pr(\mbox{failure to construct } T_1,T_2,\ldots T_{\nu})+\Pr(\exists \geq \nu/2 \mbox{ BAD trees}) \\
&\leq  O\left(  {(\log \log n)^2\over \log n} \right)
\end{align*}
Thus with probability 1-$O((\log \log n)^2/\log n)$ we end up with $\nu/2$ sets of $\nu$ paths, each of length at least $100n/\log n$ where the $i$'th set of paths all terminate in $v_i$. The sets $in(v_i)$ are still unconditioned and hence
\[
\Pr(\mbox{no $in(v_i)$ edge closes one of these paths}) \leq  \left( 1-{\nu\over n}\right) ^{\nu/2} = o(n^{-1}).
\]
Consequently the probability that we fail to eliminate a particular small cycle $C$ after breaking a particular edge is $O((\log \log n)^2/\log n)$.

If $|C|\geq 4$ then we try once or twice using independent edges of $C$, and so the probability we fail to eliminate  the cycle $C$ is certainly $O(((\log\log n)^2/\log n)^2)$.
To see this, recall that we calculated all probabilities conditional on previous outcomes and assuming $|W|\leq n^{3/4}$. This assumption also holds true for the second attempt to eliminate a small cycle. As there at most $2 \log n$ cycles w.h.p. the probability we fail to eliminate any cycle $C$ with $|C|\geq 4$ after breaking at most two edges is  $O(\log n) \cdot (\log\log n)^4/\log^2 n=o(1)$.

The number of cycles of length 1,2 or 3 in a random cycle cover is asymptotically Poisson with mean 11/6, so w.h.p. there are fewer than $\log\log n$.  Thus the probability we fail to eliminate any  cycle of length at most 3 after breaking a single edge is $O((\log\log n)^3/\log n)=o(1)$.

In summary:
\begin{lemma}
The probability that Phase 2 fails to produce a permutation digraph with minimal cycle length at least $n_0$ is $o(1)$.
\end{lemma}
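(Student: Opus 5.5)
This lemma collects the estimates made during the description of Phase 2, so I would prove it by a union bound over the small cycles of the current permutation digraph, and then over $i=1,\ldots,k$. Fix $i$ and condition on $\cM_i$, which holds w.h.p. by Markov's inequality. Also condition on the event that $\cC_i$ has at most $2\log n$ cycles, which holds w.h.p. because each matching $M_i$ is uniformly random. A successful iteration removes one small cycle and creates no new small cycle, so at most $2\log n$ iterations are needed. Each iteration adds $O(\nu\log^3 n)$ vertices to $W$ with probability $1-1/\log^2 n$, so the total size of $W$ stays below $\nu\log^4 n=o(n^{3/4})$ and condition \textbf{C}(ii) is never the binding constraint. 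I would state this first as a separate claim. All later probability bounds are conditional on the past and valid whenever $|W|\le n^{3/4}$, so the attempts can be chained.

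Next I bound the failure probability of a single attempt, meaning one broken edge $(v_0,u_0)$ of a small cycle $C$, by $O((\log\log n)^2/\log n)$. This has three parts. First, the Out-Phase reaches $\nu$ leaves. This follows from the branching argument: induced paths through \SM\ have length at most $\log\log n$ except with probability $\th_1$, by Lemma~\ref{smalldegree}; outside \SM\ the branching factor is $(1-o(1))\a$; and Chernoff bounds handle the levels after $\log\log n$. Second, at least $\nu/2$ of the In-Phase trees are GOOD. This uses the identity $L_{j,\ell}=L_{1,\ell}$ together with Markov's inequality on the number of BAD trees. Third, the closing step fails with probability $(1-\nu/n)^{\nu/2}=o(n^{-1})$, because the sets $in(v_j)$ have not been exposed.

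Then I split into cases on the length of $C$. If $|C|\ge 4$, I make a second attempt using an edge of $C$ disjoint from the first. Because the edges examined are disjoint, the conditional bound applies again, and the failure probability becomes $O((\log\log n)^4/\log^2 n)$. The union bound over at most $2\log n$ cycles then gives $o(1)$. Cycles of length $1$, $2$ or $3$ have asymptotically Poisson count with mean $11/6$, so w.h.p. there are at most $\log\log n$ of them. A single attempt for each gives total failure probability $O((\log\log n)^3/\log n)=o(1)$. Summing over the $k$ values of $i$, with $k$ constant, gives the lemma. The $E_{3,i}$ are disjoint and edges of $E_\SM$ already used by $M_1,\ldots,M_{i-1}$ are removed, so the resulting covers are edge disjoint.

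The main obstacle is justifying the conditional-independence claims. Later attempts, and the In-Phase closure, must see unexposed edges with essentially uniform endpoints even though the configuration is conditioned on the total degree $m$ and on earlier exposures. For this I would use the configuration-model description together with Lemma~\ref{degS}: for the $O(n^{3/4})$ exposed points, $P_{s,D,m}=\Omega(n^{-1/2})$ only inflates bounds that are already $o(n^{-1})$, and I would use the $1+o(1)$ case where it applies. Bounded-degree vertices in \SM\ are handled by the induced-path device.
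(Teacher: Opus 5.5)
Your proposal is correct and is essentially the paper's own argument. You condition on $\cM_i$ and on there being at most $2\log n$ cycles, and keep $|W|\le n^{3/4}$. You bound the failure of one attempt by $O((\log\log n)^2/\log n)$ through the Out-Phase branching, the GOOD In-Phase trees (via $L_{j,\ell}=L_{1,\ell}$) and the $(1-\nu/n)^{\nu/2}$ closing estimate. You then make two attempts for $|C|\ge 4$ and one for each of the $O(\log\log n)$ cycles of length at most $3$.

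Two minor remarks, neither of which affects this lemma as stated. First, the sets $E_{3,i}=\wh E_{3,i}\cup E_{\SM}$ are not disjoint; only the $\wh E_{3,i}$ are. So your edge-disjointness comment needs more care. Second, the conditioning issue you flag is handled more cleanly by fixing a degree sequence in $\cD$ and exposing the uniform configuration pairing point by point. Lemma~\ref{degS} is awkward here, because its $1+o(1)$ case only covers sets of size at most $\log^2 n$.
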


\subsection{Phase 3. Patching $\cC_i^{*}$  to a Hamilton cycle}
At the end of Phase 2 we have a cycle cover $\cC_i^{*}$ in which the minimum cycle length is $n_0=n/\log n$.
In this section we  transform the cycle cover $\cC_i^{*}$  into a Hamilton cycle, thus proving Theorem \ref{main}. We use a  method particularly suited to very sparse directed graphs,  first employed in \cite{CF1}.

We first show  that w.h.p., the cycles  in $\cC_i^*$ obtained in Phase 2 are not full of small vertices.
\begin{lemma}\label{cyclesmall}
W.h.p., the digraph $D_\bx$ contains no cycle $C$ with $|C|\geq n_0$ and $|C\cap\SM|\geq |C|/100$.
\end{lemma}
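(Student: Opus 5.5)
We will prove this with a first moment calculation over the possible cycles, in the configuration model of Section 2. Set $\ell=|C|\geq n_0$ and $t=\lceil \ell/100\rceil$. The event says that $D_\bx$ contains a directed cycle $(v_1,\ldots,v_\ell)$ in which some specified set $T$ of $t$ positions carries vertices of \SM. The difficulty is that \SM\ is defined through degrees, and degrees are correlated with the presence of the cycle. We handle this by conditioning on the degree sequence $d^\pm$ and on the event $\cD$ (which holds q.s.). Given $d^\pm$, the probability that a fixed sequence of $\ell$ distinct vertices forms a cycle is at most
\[
\prod_{j=1}^{\ell}\frac{d^+(v_j)d^-(v_{j+1})}{m-\ell}.
\]
Summing over the vertex sequences gives an upper bound of the form $\brac{\sum_v d^+(v)d^-(v)/m}^{\ell}(1+o(1))^\ell$. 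Here each factor $\sum_v d^+(v)d^-(v)/m$ is, under $\cD$, about $\r^2/c=c(1+o_c(1))$.

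The key refinement is how to count the positions in $T$. For those positions we replace the sum over all vertices by the sum over $v\in\SM$ only. There are two kinds of small vertex to bound. First, vertices that are small because their degree in $D_\bx$ is at most $c/8k$ contribute at most $|\SM|(c/8k)\cdot z/m$ per position. Second, vertices small because of a subset $\wh E_{t,i}$ have typical overall degree, but their number is at most $ne^{-c/100k}$ by Lemma~\ref{smalldegree}. To handle them uniformly, we bound $d^\pm(v)\leq \log n$ under $\cD_1$, or better, use Lemma~\ref{degs} for the set $S=\SM$: $d_\bx(\SM)\le \eta|\SM|\log(n/|\SM|)=O(c\,n e^{-c/100k}\cdot c/100k)$. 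Applying the Cauchy--Schwarz-free bound $\sum_{v\in\SM}d^+(v)d^-(v)\le (\max d)\,d_\bx(\SM)$ is too lossy. So we instead first-moment over the degree sequence directly: the expected value of $\sum_{v\in \SM} d^+(v)d^-(v)$ is $n\,\E(Z'Z''\mathbb{1}_{\text{small}})$. This is at most $n c^2 e^{-c/200k}$, since for Poisson-type $Z$ the events ``$Z\le c/8k$'' and ``$Bin(Z,p)\le c/8k$'' have probability $e^{-\Omega(c/k)}$, and weighting by $Z'Z''$ costs only a factor $O(c^2)$. This quantity is concentrated, by the same Chernoff argument as Lemma~\ref{smalldegree}. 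Thus each position in $T$ contributes a factor at most $\e_c:=c\,e^{-c/300k}$ instead of $c$.

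Putting this together, the expected number of bad cycles of length $\ell$ is at most
\[
\frac{1}{\ell}\binom{\ell}{t}\,(c(1+o(1)))^{\ell-t}\,\e_c^{\,t}\;\le\; \frac1\ell\Big( (100e)^{1/100}\, c\,(1+o(1))\, e^{-c/30000k}\Big)^{\ell}.
\]
This is still not summable, because the factor $c^\ell$ also appears for ordinary cycles: the expected number of all Hamilton-length cycles is exponentially large. The fix is to take the ratio relative to the total cycle count, but the first moment needs an absolute bound. So we pay the $c^{\ell-t}$ factor properly: the expected number of cycles of length $\ell$ is about $(\r^2/c)^\ell/\ell$, which is huge. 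Therefore a plain first moment fails, and this is the main obstacle.

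We plan to circumvent it by exploiting the randomness of the edge partition instead of the cycle structure. Membership of a vertex in \SM\ is determined mostly by the splitting of its edges into the $\wh E_{t,i}$ and by its own degree. We would therefore restate the event as follows: some set $S$ of $s\geq n_0/100$ vertices in \SM\ spans a subgraph containing a path system covering $S$ with at most $|S|$ edges, namely the cycle restricted to $S$ together with at most $s$ connecting segments. More simply, since $|\SM|\le ne^{-c/100k}$, a cycle with $|C\cap \SM|\geq |C|/100$ forces a set $S\subseteq\SM$ of size $s\in[n_0/100,ne^{-c/100k}]$ that is ``well connected'' within a set of size $100s$. We bound this by a union over sets $X=V(C)$ of size $\ell\le 100 |\SM|$, and when $\ell\ge 100ne^{-c/100k}$ this is impossible outright. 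It then remains to show that no set $X$ with $n_0\le |X|\le 100ne^{-c/100k}$ contains $|X|$ edges of $D_\bx$ (that is, a spanning cycle). For small sets this follows from the standard density estimate
\[
\binom{n}{\ell}\binom{d(X)\ell}{\ell}\bfrac{d(X)}{m}^{\ell},
\]
with $d(X)\le\eta \ell\log(n/\ell)$ by Lemma~\ref{degs}. This gives at most $\brac{\frac{e n}{\ell}\cdot\frac{e^{2}\eta^2 \ell\log^2(n/\ell)}{m}}^{\ell}=\brac{O(1)\,c\,\log^2(n/\ell)\cdot\ldots}^\ell$, which again is not small. So the final step is to use the $t$ small vertices to gain the needed factor $e^{-\Omega(c/k)t}$, with $t$ proportional to $\ell$ and the decay beating the $c^{O(1)}$ per vertex once $c\ge c_k$. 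Making this balance work is exactly the delicate point; we would try to show that each vertex of $C\cap\SM$ contributes a factor $O(c^{-1}e^{-c/200k})\cdot c^{2}$ relative to a typical vertex, and then choose $c_k$ large.
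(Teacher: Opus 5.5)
There is a genuine gap, and it is self-inflicted: you abandon a first-moment bound that already proves the lemma. Your display bounds the expected number of bad $\ell$-cycles by $\frac1\ell\big((100e)^{1/100}c(1+o(1))e^{-c/30000k}\big)^\ell$. Since $c\,e^{-c/30000k}\to 0$ as $c\to\infty$, you can choose $c_k$ so that the base is at most $1/2$ for every $c\ge c_k$. Summing over $\ell\ge n_0$ then gives $O(2^{-n_0})$, which absorbs the polynomial factor from \eqref{CLT} used to remove the conditioning on the degree sums, as well as the q.s.\ failure of $\cD$.

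That the expected number of \emph{all} $\ell$-cycles is about $c^\ell/\ell$ is irrelevant. A first moment for \emph{bad} cycles only needs each of the $t\ge\ell/100$ positions in \SM\ to cost a factor $e^{-\Omega(c/k)}$ relative to an ordinary position, and this exponential beats the factor $c$ per ordinary position once $c$ is large. This is precisely the paper's proof. It runs a direct first moment over cycles with $\ell_1\ge\ell/100$ vertices chosen from the at most $\e n$ small vertices ($\e=e^{-c/100k}$, Lemma \ref{smalldegree}), for $n_0\le\ell\le n_1=200\e n$, and obtains $O(n)\sum_\ell\big(3\e^{1/100}/(8k)^{1/100}\big)^\ell=o(1)$. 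The paper's display charges an ordinary vertex only $\E(Z\mid Z\ge c/8k)/c\le 2$. Your per-vertex factor $\sum_v d^+(v)d^-(v)/m\approx c$ is the more accurate accounting, and the conclusion survives for the reason just given, at the price of a larger $c_k$.

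As written, however, the proposal does not prove the lemma. The replacement route rests on the claim that no $X$ with $n_0\le|X|\le 100ne^{-c/100k}$ spans $|X|$ edges. That claim is false: attach a tree of the required size to any short cycle of the connected underlying graph. It also discards the \SM\ information entirely. The route ends with a balance you only say you would try to establish, and that balance is exactly what your first computation already gives.

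When you return to that computation, repair one point. \SM\ is not determined by $d^\pm$, because membership via some $\wh E_{t,i}$ depends on the class labels of $v$'s edges. So you cannot sum over $v\in\SM$ as though it were a fixed set while averaging over the pairing. The fix:
\begin{itemize}
\item The labels are attached to edge indices independently of $\bx$. So conditional on $C\subseteq D_\bx$ and $d^\pm$, the class out- and in-degrees of each $v$ are still $Bin(d^+(v),1/4k)$ and $Bin(d^-(v),1/4k)$.
\item Pass to a subset of $T$ of size at least $t/3$ whose vertices are all small for the same kind of reason: low degree in $D_\bx$, low out-degree in a class, or low in-degree in a class. Out-edge (respectively in-edge) sets of distinct vertices are disjoint, so these events become independent. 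This costs a factor $O(k)^t$ and two thirds of the exponent, both harmless.
\item Your Cauchy--Schwarz estimate, $O(c^2)e^{-\Omega(c/k)}$ for $\E(Z'Z'')$ restricted to small vertices and averaged over the truncated Poisson degrees, then gives the per-position factor directly. No concentration step is needed.
\end{itemize}
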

\begin{proof}
Let $\e=e^{-c/100k}$ and $n_1=200\e n$. Conditional on the claim in Lemma \ref{smalldegree}, the expected number of relevant cycles can be bounded by
\begin{align*}
&O(n)\sum_{\ell=n_0}^{n_1}\sum_{\substack{\ell_1+\ell_2=\ell\\\ell_1\geq \ell/100}}\binom{\e n}{\ell_1}\binom{n-\e n}{\ell_2}(\ell-1)!\times\\
&\sum_{1\leq d_1,\ldots,d_{\ell_1}\leq c/8k}\;\prod_{r=1}^{\ell_1}\Pr\brac{Z=d_r\bigg|\ Z\leq \frac{c}{8k}}\cdot\frac{d_r}{cn}\cdot
\sum_{1\leq d_1,\ldots,d_{\ell_2}}\;\prod_{r=1}^{\ell_2}\Pr\brac{Z=d_r\ \bigg|\ Z\geq \frac{c}{8k}}\cdot\frac{d_r}{cn}\\
&\leq O(n)\sum_{\ell=n_0}^{n_1} \sum_{\ell_1\geq \ell/100}\frac{(\ell-1)!}{\ell_1!\ell_2!}\cdot \e^{\ell_1}\bfrac{1}{8k}^{\ell_1}\cdot
 \bfrac{1}{c}^{\ell_2}\E\brac{Z\ \bigg|\ Z\geq \frac{c}{8k}}^{\ell_2}\\
&\leq O(n)\sum_{\ell=n_0}^{n_1}\frac{1}{\ell}\sum_{\ell_1\geq \ell/100}\binom{\ell}{\ell_1}\frac{\e^{\ell_1}2^{\ell_2}}{(8k)^{\ell_1}}=O\brac n{\sum_{\ell=n_0}^{n_1}\bfrac{3\e^{1/100}}{(8k)^{1/100}}^\ell}=o(1),
\end{align*}
assuming that $e^{c/10^4k}\geq 3$. \\
After choosing the vertices of the cycle and which are in $\SM$ (the vertices of in- or out-degree at most $c/8k$) we use $d_r/cn$ as a bound on the probability of the existence of a edge in the cycle, and  $2c$ as an upper bound on the conditional expected degree in the last line.
\end{proof}
Fix $i$ and let $C_1,C_2,\ldots,C_\ell$ be the cycles of $\cC_i^{*}$, and let
\begin{equation}\label{Vi}
LARGE=V \sm (W\cup\SM)   \quad\text{ and  } \quad V_j=V(C_j)\sm (W\cup\SM),j=1,2,\ldots,\ell.
\end{equation}
If $\ell=1$, then $C_1$ is already a Hamilton cycle. If $\ell \ge 2$, then let  $c_j = |V_j|$ where $ c_1 \leq c_2 \leq \cdots \leq  c_\ell$, and $c_1 \geq 0.99n_0 -n^{3/4}\geq 9n_0/10$.

For each $C_j$ we consider selecting a set of $\k_j = 2 \rdown{\frac{10c_j}{n_0}}+ 1$ vertices   $v\in C_j\setminus W$, and deleting the edge $(v,u)$ in $\cC_i^{*}$. Let $\k = {\sum_{j=1}^\ell} \k_j$ and relabel (temporarily) the broken edges as $(v_s ,u_s), s\in [\k]$ as follows. In each cycle $C_j$ identify the lowest numbered vertex $x_j$ which lost a cycle edge directed out of it. Put $v_1=x_1$ and then go round $C_1$ defining $v_2,v_3,\ldots { v_{\k_1 }} $ in order. Then continuing with $C_2$, let $v_{\k_1+1}=x_2$ and so on. We thus have $\k$   path sections $P_s=u_{\f(s)}P v_s$ from  $u_{\f(s)}$ to $v_s$ in $\cC_i^{*}$, where $r=\f(s)$ is the label of $u_r$ the initial vertex on the path $P_s$ with terminal vertex $v_s$. By construction, the length of the cycle in $\f$ corresponding to $C_j$ is of odd length $\k_j$. We see that $\f$ is an even permutation as all the  cycles  of $\f$ are of odd length, a fact which was used in \cite{CF1} to establish the inequality \eqref{kk3} given below.

Let $E_{4,i},i=1,2,\ldots,k$ be a random partition of $E_4$ into $k$ almost equal size subsets. It is our intention to rejoin these path sections of $\cC_i^{*}$ using the edges of $E_{4,i}$ to make a Hamilton cycle if we can. In which case, this defines a cyclic permutation $\t$ where $\t(a) =b$  if $P_a$ is joined to $P_b$ by $(v_a,u_{\phi (b)})$, and thus $\t \in  H_\k$ the set of cyclic permutations on $[\k]$. We will use the second moment method to show that a suitable $\t$ exists w.h.p.

The permutation labelling is illustrated in  Figure \ref{fig:Paths}.
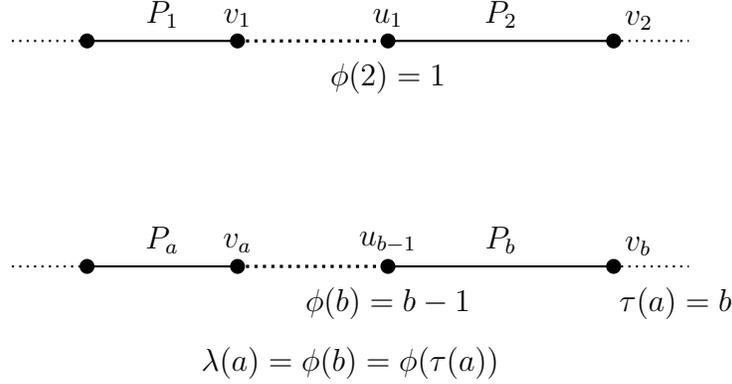
\begin{figure}[H]
\begin{center}
\begin{tikzpicture}[scale=1,inner sep=2pt]
\tikzstyle{vertex}=[circle, fill, inner sep=2pt]
\node  [vertex,label=above:] at (0,3) {};
\node  [vertex ,label=above:$v_1$] at (2,3) {} ;
\node  [vertex,label=above :$u_1$] at (4,3)  {};
\node  [vertex, label=above right:$v_2$] at (7,3)  {};
\draw  [thick] (0,3) -- (2,3) (4,3) -- (7,3);
\draw [very thick, dotted] (2,3)-- (4,3);
\draw [thick, dotted] (-1,3)-- (0,3);
\draw [thick, dotted] (7,3)-- (8,3);
\node[above] at (1,3.1) {$P_1$};
\node[above] at (5.5,3.1) {$P_2$};
\node[below]at (4,2.8) {$\phi(2)=1$};
%
%
\node  [vertex,label=above:] at (0,0) {};
\node  [vertex ,label=above:$v_a$] at (2,0) {} ;
\node  [vertex,label=above :$u_{b-1}$] at (4,0)  {};
\node  [vertex, label=above right:$v_b$] at (7,0)  {};
\draw  [thick] (0,0) -- (2,0) (4,0) -- (7,0);
\draw [very thick, dotted] (2,0)-- (4,0);
\draw [thick, dotted] (-1,0)-- (0,0);
\draw [thick, dotted] (7,0)-- (8,0);
\node[above] at (1,0.1) {$P_a$};
\node[above] at (5.5,0.1) {$P_b$};
\node[below]at (4,-0.2) {$\phi(b)={b-1}$};
\node[below right]at (7,-0.2) { $\tau(a)=b$};
\node[below]at (3.5,-1.0) { $\lambda(a)=\phi(b)=\phi(\tau(a))$};

\end{tikzpicture}
\end{center}
\caption{Original and rearranged path sections showing the permutations  $\phi,\tau,\lambda$}
\label{fig:Paths}
\end{figure}

We will restrict our choices for  $\t$, to produce a variance reduction in the second moment calculation.
In our analysis we  limit our attention to $\t\in R_{\f} =\{ \t \in H_{\k} : \f \t \in H_{\k} \}$.
 Given $\t$, define $\lambda=\f\t$.
If $\t\in R_{\f}$ then we have not only constructed a Hamilton cycle in $\cC_i^{*}\cup E_{4,i}$, but also in the {\em auxiliary digraph} $\Lambda$, whose edges are $(a, \lambda(a))$, $a \in [\k]$,where  $\lambda(a) =\phi(b)$ encodes the fact we joined $P_a$ to $P_b$ using the edge $(v_a,u_{\phi(b)})$.

The following inequality is proved in Lemma 5 of \cite{CF1}:
\beq{kk3}{
(\k-2)! \leq |R_{\f}| \leq (\k-1)!.
}

\begin{figure}[H]
\begin{center}
\begin{tikzpicture}[scale=0.75,inner sep=2pt]
\tikzstyle{vertex}=[circle, fill, inner sep=2pt]
\node  [vertex,label=above: $v_1$] at (1.5,5.2) {};
\node  [vertex,label=above: $u_1$] at (4.5,5.2) {};
\node  [vertex,label=left: $u_3$] at (0,2.6) {};
\node  [vertex,label=right: $v_2$] at (6,2.6) {};
\node  [vertex,label=below: $u_2$] at (4.5,0) {};
\node  [vertex,label=below: $v_3$] at (1.5,0) {};

\draw[very thick, dotted] (1.5,0)--(0,2.6) (1.5,5.2)--(4.5,5.2) (4.5,0)--(6,2.6);
\draw[very thick] (1.5,5.2)--(0,2.6) (1.5,0)--(4.5,0) (4.5,5.2)--(6,2.6);
\node[above] at (0.5,4) {$P_1$};
\node[above] at (5.5,4) {$P_2$};
\node[above] at (3,0.1) {$P_3$};

\begin{scope}[shift={(10,0)}]
\node  [vertex,label=above: $v_1$] at (1.5,5.2) {};
\node  [vertex,label=above: $u_2$] at (4.5,5.2) {};
\node  [vertex,label=left: $u_3$] at (0,2.6) {};
\node  [vertex,label=right: $v_3$] at (6,2.6) {};
\node  [vertex,label=below: $u_1$] at (4.5,0) {};
\node  [vertex,label=below: $v_2$] at (1.5,0) {};

\draw[very thick, dotted] (1.5,0)--(0,2.6) (1.5,5.2)--(4.5,5.2) (4.5,0)--(6,2.6);
\draw[very thick] (1.5,5.2)--(0,2.6) (1.5,0)--(4.5,0) (4.5,5.2)--(6,2.6);
\node[above] at (0.5,4) {$P_1$};
\node[above] at (5.5,4) {$P_3$};
\node[above] at (3,0.1) {$P_2$};

\end{scope}

\end{tikzpicture}
\end{center}
\caption{Left: Original cycle $(P_1,P_2,P_3)$  with permutation $\phi=(1,3,2)$.  Broken edges shown dotted. Right: New cycle $(P_1,P_3,P_2)$ with associated permutations $\tau=(1,3,2)$ and $\lambda=(1,2,3)$. Permutations  given in cycle notation.}
\label{fig:cycles}
\end{figure}
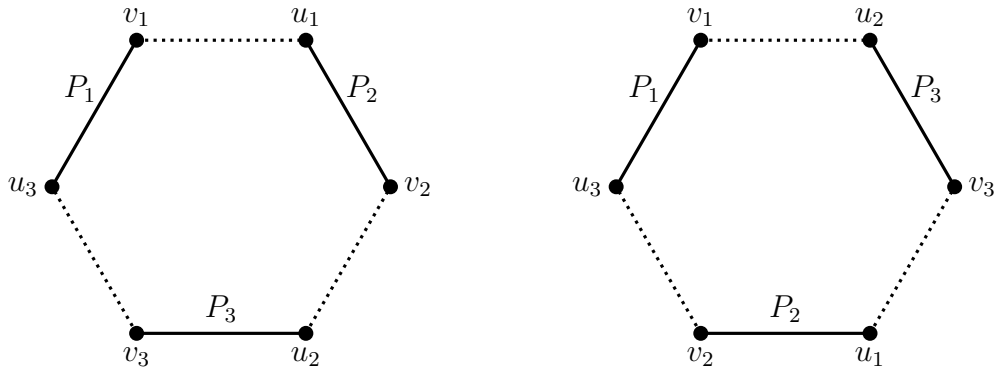

Let $H$ stand for the union of the permutation digraph $\cC_i^{*}$ and $E_{4,i}$. We finish our proof by proving
\begin{lemma}\label{Hcy}
$\Pr(H\text{ does not contain a Hamilton cycle}) =o(1)$.
\end{lemma}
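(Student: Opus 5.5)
Let $X$ count the permutations $\t\in R_\f$ for which every joining edge $(v_a,u_{\f(\t(a))})$, $a\in[\k]$, lies in $E_{4,i}$. Each such $\t$ gives a Hamilton cycle in $H$, so it is enough to show $X>0$ w.h.p. We prove this by the second moment method, as in \cite{CF1}: show $\E(X)\to\infty$ and $\E(X^2)\le(1+o(1))\E(X)^2$, then apply Chebyshev (or Paley--Zygmund).

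We condition on the outcome of Phases 0--2. The sets $E_{4,i}$ consist of edges never examined there, since all vertices touched lie in $W$, and the endpoints $v_s,u_s$ were chosen in $LARGE$, outside $W\cup\SM$. Each $v_a$ has out-degree at least $c/8k$ in $D_\bx$. About a $(1-3/4k)/k$ fraction of these out-edges lie in $E_{4,i}$, and their heads are close to uniform over $[n]$ in the configuration model. Hence, using Lemma \ref{degS} to handle the conditioning on $m$, for any fixed set of $\k$ pairs $(v_a,u_b)$ with distinct tails and distinct heads,
\[
\Pr\brac{\text{all }(v_a,u_b)\in E_{4,i}}\ge \bfrac{p}{n}^{\k}(1+o(1)),\qquad p\geq c/10k^2 .
\]
We have $\k\approx 20\ell'+\ell$ with $\ell'=\sum_j\rdown{c_j/n_0}$, and $\sum_j c_j\ge 0.98n$ by Lemma \ref{cyclesmall}, so $\k=\Theta(\log n)$. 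Combining this with \eqref{kk3} gives
\[
\E(X)\ge (\k-2)!\bfrac{p}{n}^{\k}\cdot n^{\k}\cdot(\text{correction}) .
\]
Here I am not yet sure how the correction term is computed; I would first check whether the factor should come from the degree products $d^+(v_a)d^-(u_b)/m$ rather than $p/n$, since $n^{\k}$ need not appear. With the per-edge probability $\approx d^+_i(v_a)d^-_i(u_b)/m\ge c/(64k^2 n)$ we get $\E(X)\ge(\k-2)!(\Theta(c)/n)^{\k}$. This is large only if the $\k$ relevant vertices have many candidate neighbours, so I would redo the count over the subsets $V_j$ as in \cite{CF1}: choose the $\k$ breakpoints randomly inside the $V_j$, so that the edges land in the required set with probability of order $|V|$-size ratio. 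This gives $\E(X)\ge \frac{(\k-2)!}{\k^{\k}}(\a c)^{\k}\approx (\a c/e)^{\k}/\k^2\to\infty$ for $c\ge c_k$.

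For the second moment we write
\[
\E(X^2)=\sum_{\t,\s\in R_\f}\Pr(\t,\s\text{ both realised}).
\]
Group the pairs by the number $t$ of joining edges they share. Shared edges form path fragments in the auxiliary digraph $\Lambda$. The Cooper--Frieze counting lemma bounds the number of $\s\in R_\f$ sharing exactly $t$ edges (forming $b$ fragments) with a fixed $\t$ by roughly $\binom{t-1}{b-1}\binom{\k}{b}(\k-t-1)!$, up to the parity restriction; evenness of $\f$ is exactly what makes the lower bound in \eqref{kk3} usable here. Each shared edge saves a factor $\Theta(c)/\k$ in probability. Summing over $t\ge1$ then gives
\[
\frac{\E(X^2)}{\E(X)^2}\le 1+O\bfrac{\k}{c}\cdot\frac1{\k}+o(1),
\]
and a more careful computation improves this to $1+o(1)$ once the near-uniformity of heads is used. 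The main obstacle is this step: controlling the dependence coming from fixed degrees, since edges are not independent in the configuration model, and handling terms with large $t$. For large $t$ I would use the crude bound $\Pr(\s\mid\t)\le(\Theta(1)/\k)^{\k-t}$ together with $\k=O(\log n)$, so that the dependence corrections are $(1+O(\log^2n/n))^{\k}=1+o(1)$. Finally, Chebyshev gives $\Pr(X=0)=o(1)$, which proves Lemma \ref{Hcy}. A union bound over $i\le k$ then finishes Theorem \ref{main}, since the $E_{4,i}$ are disjoint and so the resulting Hamilton cycles are edge disjoint.
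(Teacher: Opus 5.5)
There is a genuine gap, and it starts with the choice of random variable. With the breakpoints fixed, your $X$ has $\E(X)\approx(\k-2)!\,(\Theta(c)/n)^{\k}\to 0$ because $\k=\Theta(\log n)$. Then $\E(X^2)/\E(X)^2\geq 1/\E(X)\to\infty$, so no second-moment argument can work for this $X$. Choosing the breakpoints at random does not repair this, since averaging these tiny expectations leaves them tiny. The paper's $X$ counts successful pairs $(M,\t)$, where $M$ runs over all choices of $\k_j$ breakpoints in each $V_j$ and $\t\in R_\f$. The extra factor $\prod_j\binom{c_j}{\k_j}\approx\prod_j(ec_j/\k_j)^{\k_j}$ is what cancels $n^{-\k}$ and gives $\E(X)\to\infty$. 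Your final lower bound for $\E(X)$ has the right order only after this redefinition.

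More seriously, your variance estimate lacks the mechanism that makes it work. You group pairs only by the number $t$ of shared joining edges and assert that each shared edge saves a factor $\Theta(c)/\k$. This does not produce $1+o(1)$, and a bound $1+O(1/c)$ would not give a w.h.p.\ statement anyway. The real issue is that a shared joining edge raises the joint success probability by a factor of about $8kn/c$, and this must be paid for. The paper does this with two parameters, $s=|M\cap M'|$ and $t=|N\cap N'|$.

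First, sharing $s$ breakpoints costs the hypergeometric factor $\prod_j\binom{\k_j}{\s_j}\binom{c_j-\k_j}{\k_j-\s_j}/\binom{c_j}{\k_j}$. Summed over $\s_1+\cdots+\s_\ell=s$, this is at most roughly $e^{\ell/2}\binom{\k}{s}(22/n_0)^se^{-s^2/2\k}$.

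Second, the restriction $\t\in R_\f$, i.e.\ that $\lambda=\f\t$ is a single cycle, shows that $t=s$ forces $(M,\t)=(M',\t')$. Hence $t\leq s-1$ for distinct pairs. That strict inequality supplies one extra factor $c\k/(8kn)$. This is exactly what beats the loss $e^{\ell/2}\leq n^{5/9}$ and the factor $e^{(176k/c)\log n}$ from summing over $s$, giving $E_2=o(\E(X)^2)$.

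In your outline, $R_\f$ and the evenness of $\f$ enter only through the lower bound \eqref{kk3}. Their essential role in bounding the correlations is missing.
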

{\em Proof.}
Let $X$ be the number of Hamilton cycles in $H$ obtainable by deleting edges as above,  rearranging the path sections generated by $\f$ according to those $\t \in R_{\f}$ and if possible reconnecting all the sections using edges of $E_{4,i}$. We will use the inequality
\begin{equation}
\label{eq?}
\Pr(X>0)\geq {\E(X)^2\over \E(X^2)}.
\end{equation}
Probabilities in (\ref{eq?}) are with respect to the space of edges $E_{4,i}$ incident with vertices in LARGE, see \eqref{Vi}.
Now the definition of $\k_j = 2 \rdown{{10c_j}/{n_0}}+ 1$  yields that $\k_j\geq 19$, and
\[
\frac{20n}{n_0}-\ell\leq  \k \leq \frac{20n}{n_0}+\ell
\]
and so
$$19\log n\leq \k\leq 21\log n.$$
As $c_j \ge 9 n_0/10$, then
\[
\ell\leq \frac{10\log n}{9}\quad\text{ and also }\quad {c_j\over \k_j}\geq {n_0\over 21},\qquad 1\leq j \leq \ell.
\]
Let $\Omega$ denote the set of possible cycle re-arrangements arising from $\t \in R_\f$. Say $\omega(\t)\in\Omega$ is a {\em success} if $E_{4,i}$ contains the edges $(a,b)$  needed to make a  Hamilton cycle from the path sections arranged according to the cyclic permutation $\t$  where $\t(a) =b$. Thus,
\begin{align}
\E(X) & =  \sum_{\omega\in\Omega}\Pr(\omega\mbox{ is a success}) \nonumber \\
& =  \sum_{\omega\in\Omega} \left(1-\left(1-{1\over n}\right)^{c/8k}\right)^\k  \nonumber\\
& \geq  (1-o(1))\left({c\over 8kn}\right)^\k(\k-2)!\prod_{j=1}^\ell {c_j \choose \k_j} \nonumber  \\
& \geq  \frac{1}{\k \sqrt{\k}} \left( \frac{c\k}{8ken} \right)^\k
\prod_{j=1}^\ell \left(\left( \frac{c_je^{1-1/12\k_j}}{\k_j^{1+(1/2\k_j)}} \right)^{\k_j}\left({1-2\k_i^2/c_i\over \sqrt{2\pi} }\right)\right) \nonumber \\
& \geq  \frac{(2\pi)^{-\ell/2}e^{-\ell/12}}{\k \sqrt{\k}} \left( \frac{c\k}{8ken} \right)^\k
\prod_{j=1}^\ell \left( \frac{2c_j}{\k_j} \right)^{\k_j} \nonumber \\
& \geq \frac{1}{n^{1/8}\k \sqrt{\k}}\left( \frac{c\k}{8ken}  \right)^\k \bfrac{n_0}{11}^\k \nonumber \\
& \to\infty\label{noo2},
\end{align}
for $c$ sufficiently large. We used $\ell \le (10/9) \log n$, and $(10/9)(1/12+ \log \sqrt{2 \pi})<1/8$
to obtain the penultimate line.

Let $M ,M^{\prime}$ be two sets of selected edges which have been deleted from $\cC_i^{*}$ and whose path sections have been rearranged into Hamilton cycles according to $\t, \t^{\prime}$ respectively. Let $N,N'$ be the corresponding sets of edges which have been added to make
the Hamilton cycles. What is the interaction between these two Hamilton cycles?

Let $s=|M\cap M'|$ and $t=|N\cap N'|$, where $t>0$. Now $t\leq s$ since if $(v,u)\in N\cap N'$ then there must be a unique $(\tilde{v},u)=(v_i,u_i) \in M\cap M'$ which is the unique broken $\cC_i^{*}$-edge into $u$ (resp. $(v, \tilde u)=(v_j,u_j)$ which is the unique broken $\cC_i^*$ edge out of $v$). If $i=j$ we are merely replacing the edge we  broke earlier, and we regard these edges as excluded from $M,N$. As we broke at least two cycles, not every edge can have $i=j$. Henceforth we assume that $i,j$ are distinct, which  excludes the case $s=1$.

We claim that $t=s$ implies $t=s=\k$ and $(M,\t )=(M',\t ')$. (This is why we have restricted our attention
to $\t \in R_{\f}$.) Suppose then that $t=s$ and $(v_i,u_i)\in M\cap M'$. Now the edge $(v_i,u_{\lambda (i)})\in N$ and since $t=s$ this edge must also be in $N'$. But this implies that $(v_{\lambda (i)},u_{\lambda (i)})\in M'$ and hence in $M\cap M'$. Repeating the argument we see that $(v_{\lambda ^\ell(i)},u_{\lambda ^\ell(i)})\in M\cap M'$ for all $\ell\geq 0$. But $\lambda$ is cyclic and so our claim follows.

We adopt the following notation. Let $<s,t>$ denote $|M\cap M'|=s$ and $|N\cap N'|=t$ where $t<s$. So
\begin{eqnarray}
\E(X^2) &\leq & \E(X) + (1+o(1))\sum_{M\in\Omega} \left( \frac{c}{8kn}\right) ^{\k} \sum_{\Omega \atop N'\cap N=\emptyset}\left( \frac{c}{8kn} \right) ^{\k} \nonumber \\
& & +(1+o(1))\sum_{M\in \Omega} \left( \frac{c}{8kn} \right) ^{\k} \sum_{s=2}^{\k}
\sum_{t=1}^{s-1}\sum_{\Omega \atop <s,t>} \left( \frac{c}{8kn} \right) ^{\k-t} \nonumber \\
&=&\E(X)+E_1 + E_2 \; \mbox{ say}. \label{eq???}
\end{eqnarray}
Clearly
\begin{equation}
\label{eq????}
E_1 \leq (1+o(1))\E(X)^2.
\end{equation}
For given $\t$, how many $\t^{\prime}$ satisfy the condition $<s,t>$? Previously $|R_{\f}| \geq (\k-2)!$ and now given $<s,t>$, $|R_{\f}(s,t)| \leq (\k-t-1)!$, (consider fixing $t$ edges of $\Lambda^{\prime}$).
\\
Thus
\[
E_2 \leq \E(X)^2 \;
\sum_{s=2}^{\k} \sum_{t=1}^{s-1} {s\choose t}\left[\sum_{\scriptstyle \sigma_1+ \cdots +\sigma_k =s} \prod_{j=1}^\ell \frac{{\k_j \choose \sigma_j}{c_j - \k_j \choose \k_j - \sigma_j}}{ {c_j \choose \k_j}}\right] \frac{(\k-t-1)!}{(\k-2)!} \left( \frac{8kn}{c} \right) ^t .
\]
Now
\begin{eqnarray*}
\frac{{c_j-\k_j \choose \k_j-\sigma_j}}{{c_j \choose \k_j}} & \leq & {{c_j\choose \k_j-\sigma_j}\over {c_j\choose \k_j}} \\
& \leq & ( 1+o(1))\left( \frac{\k_j}{c_j} \right) ^{\s _j}\exp\left\{-{\sigma_j(\sigma_j-1)\over 2\k_j}\right\} \\
& \leq & ( 1+o(1))\left( \frac{22}{n_0} \right) ^{\s _j}\exp\left\{-{\sigma_j(\sigma_j-1)\over 2\k_j}\right\}
\end{eqnarray*}
where the $o(1)$ term is $O((\log n)^3/n)$.
Also
$$\sum_{j=1}^\ell{\sigma_j^2\over 2\k_j}\geq {s^2\over 2\k}\hspace{.25in}\mbox{for }\sigma_1+\cdots \sigma_k=s,$$
$$\sum_{j=1}^\ell{\sigma_j\over 2\k_j}\leq {\ell\over 2},$$
and
$$\sum_{\scriptstyle \sigma_1+ \cdots +\sigma_\ell =s} \prod_{j=1}^\ell
{\k_j \choose \sigma_j}={\k\choose s}.$$
Hence
\begin{align*}
\frac{E_2}{E(X)^2} & \leq (1+o(1))e^{\ell/2}\sum_{s=2}^{\k} \sum_{t=1}^{s-1}{s\choose t}\exp\left\{-{s^2\over 2\k}\right\}\left( {22\over n_0} \right) ^s {\k\choose s}\frac{(\k-t-1)!}{(\k-2)!}  \left( \frac{8kn}{c}\right) ^t  \\
& \leq  (1+o(1))n^{5/9}\sum_{s=2}^{\k} \sum_{t=1}^{s-1} {s\choose t}\exp\left\{-{s^2\over 2\k}\right\}
\left( {22\over n_0} \right) ^s{\k^{s-(t-1)}\over (s-1)!}\left( \frac{8kn}{c}\right)^t  \\
& =  (1+o(1))n^{5/9}\sum_{s=2}^{\k} \left( {22\over n_0} \right) ^s{\k^s\over s!}\exp\left\{-{s^2\over 2\k}\right\}\k\sum_{t=1}^{s-1} {s\choose t}\left( \frac{8kn}{c\k}\right) ^t  \\
& \leq  \frac{c\k^3}{4kn^{4/9}}\;\sum_{s=2}^{\k} \left( {22\over n_0} \right) ^s{\k^s\over s!}\exp\left\{-{s^2\over 2\k}\right\}\bfrac{8kn}{c\k}^{s}\\
& \leq  {c\k^3\over 4kn^{4/9}}\;\sum_{s=2}^{\k}\left({176kn\over c n_0}
\exp\{-{s \over 2\k}\}\right)^s {1\over s!}\\
 = & O(1) \frac{\log^3 n}{n^{4/9}}\;e^{(176 k/c) \log n}=o(1),
\end{align*}
provided $c$ is a sufficiently large constant.

\end{document}